\documentclass[reqno,11pt,a4paper]{amsart}

\usepackage{color, varwidth}

\usepackage{hyperref}

\usepackage{amsmath, amsthm, amsfonts, amssymb}

\usepackage{mathtools}
\mathtoolsset{showonlyrefs}

\usepackage[applemac]{inputenc}

\theoremstyle{plain}
\newtheorem{theorem}{Theorem}
\newtheorem{proposition}[theorem]{Proposition}
\newtheorem{lemma}[theorem]{Lemma}
\newtheorem{corollary}[theorem]{Corollary}

\theoremstyle{definition}

\newtheorem{definition}[theorem]{Definition}

\theoremstyle{remark}
\newtheorem{remark}[theorem]{Remark}

\DeclareMathOperator{\im}{Im}
\def\Z{\mathbb{Z}}	
	
\def\R{\mathbb{R}}	
\renewcommand{\leq}{\leqslant} 		
\renewcommand{\geq}{\geqslant}


\def\mlambda{\boldsymbol{\lambda}}
\def\mmu{\boldsymbol{\mu}}
\def\md{\boldsymbol{\d}}
\DeclareMathOperator{\Span}{Span}
\def\ud{\mathrm{d}}

\def\d{\partial}

\let\ker\relax
\DeclareMathOperator{\ker}{Ker}
\def\cA{\mathcal{A}}
\def\hcA{\hat{\cA}}
\def\cF{\mathcal{F}}
\def\hcF{\hat{\cF}}

\def\cV{\mathcal{V}}
\DeclareMathOperator{\der}{Der}

\begin{document}

\title[Cohomology of Dubrovin-Novikov brackets]{Poisson cohomology of scalar multidimensional Dubrovin-Novikov brackets}
\author{Guido Carlet}
\author{Matteo Casati}
\author{Sergey Shadrin}
\begin{abstract}
We compute the Poisson cohomology of a scalar Poisson bracket of Dubrovin-Novikov type with $D$ independent variables. We find that the second and third cohomology groups are generically non-vanishing in $D>1$. Hence, in contrast with the $D=1$ case, the deformation theory in the multivariable case is non-trivial.
\end{abstract}
\maketitle

\tableofcontents

\section{Introduction}

The multidimensional Dubrovin-Novikov (DN) type Poisson brackets were introduced by Dubrovin and Novikov in~\cite{dn83, dn84}. 

Let $x=(x^1, \dots , x^D)$ be coordinates on the torus $T^D$ and $u=(u^1, \dots , u^N)$ be variables on an open ball $U \subset \R^N$ (or more generally local coordinates on a smooth $N$-dimensional manifold $M$). The Dubrovin-Novikov brackets are of the form
\begin{equation} \label{DN}
\{ u^i(x) , u^j(y) \} = \sum_{\alpha=1}^D \left( g^{ij \alpha} (u(x)) \partial_{x^\alpha} \delta (x-y) + b_k^{ij \alpha}(u(x)) \partial_{x^\alpha} u^k (x) \delta(x-y) \right)
\end{equation}
were $g^{ij \alpha}(u)$ and $b_k^{ij\alpha}(u)$ are smooth functions over $U$, and $\delta(x-y)$ denotes the multidimensional Dirac delta function
\begin{equation}
\delta(x-y) = \delta(x^1 - y^1) \cdots \delta(x^D - y^D) .
\end{equation}

The fact that the brackets~\eqref{DN} are Poisson (i.e. that they are skew-symmetric and satisfy the Jacobi identity) imposes several conditions on the functions $g^{ij \alpha}(u)$ and $b_k^{ij\alpha}(u)$. 

Such conditions have been studied and are still being studied for different values of $D$, $N$ by several authors. 
In particular they have been related to certain geometric structures over $M$, since the seminal paper by Dubrovin and Novikov~\cite{dn83} which states a one to one  correspondence between the nondegenerate $D=1$ Poisson brackets and flat contravariant pseudo-Riemannian metrics. 
A general set of equations valid for all $D$, $N$ has been obtained by Mokhov~\cite{m88}; a classification of the nondegenerate brackets for $D=2$ exists for $N=2$ \cite{m08} and $N=3,4$ \cite{fls15}, where it relies on the notion of Killing $(1,1)$-tensors; in special cases a classification can be obtained for $D=2$ and arbitrary $N$, or for $D=3$, $N\leq3$ \cite{fls15}, or again for arbitrary $D$ and $N$ \cite{m08}. Very recently some first results on the classification of degenerate brackets have appeared \cite{s14,s15}.

As pointed out in~\cite{dz01}, at least in the scalar case, an important problem is to classify the dispersive deformations of such brackets. 
Let us state this problem precisely in the multidimensional case. 

Let $\cA$ be the space of differential polynomials, i.e. formal power series in the variables $\partial_{x^1}^{k_1} \cdots \partial_{x^D}^{k_D} u^i$ with coefficients which are smooth functions of $u^i$:
\begin{equation}
\cA = C^\infty (U) [[ \{ \partial_{x^1}^{k_1} \cdots \partial_{x^D}^{k_D} u^i \text{ with } k_1, \dots ,k_D \geq0 , \,(k_1, \dots  , k_D ) \not= 0 \} ]].
\end{equation} 
The standard degree $\deg$ on $\cA$ counts the number of derivatives $\partial_{x^i}$ in a monomial, i.e., it is defined by $\deg ( \partial_{x^1}^{k_1} \cdots \partial_{x^D}^{k_D} u^i) = k_1 + \cdots + k_D$.

We consider dispersive deformations of multidimensional DN brackets of the form
\begin{align} \label{def}
\{ u^i(x) ,u^j(y) \}^\epsilon &= \{ u^i(x) , u^j(y) \} +\\ &+\sum_{k>0} \epsilon^k \sum_{\substack{k_1,\dots ,k_D \geq0 \\ k_1 + \cdots +k_D \leq k+1}}  A^{ij}_{k; k_1, \dots,k_D}(u(x)) \partial_{x^1}^{k_1} \cdots \partial_{x^D}^{k_D} \delta(x-y)  \notag
\end{align}
where $A^{ij}_{k; k_1, \dots,k_D} \in \cA$ and $\deg A^{ij}_{k; k_1, \dots,k_D} = k - k_1 \cdots - k_D +1$.

The Miura-type transformations (of the second kind~\cite{lz11}) are changes of variables of the form
\begin{equation}
v^i = u^i + \sum_{k\geq1} \epsilon^k F_k^i
\end{equation}
where $F_k^i \in \cA$ and $\deg F_k^i = k$. They form a group called Miura group.

The main problem is to classify the dispersive deformations~\eqref{def} of the multidimensional DN type brackets $\{,\}$ up to equivalence. Two deformations are equivalent when they are related by a Miura transformation. A deformation $\{ , \}^\epsilon$ is called trivial when it is equivalent to the undeformed DN type brackets $\{, \}$.

It is a well known general fact that the dispersive deformations are governed by the second and third Poisson cohomology groups associated with the dispersionless Poisson bracket $\{,\}$. In particular the second Poisson cohomology classifies the infinitesimal deformations, while the third Poisson cohomology encodes the obstructions to the extension of an infinitesimal deformation to a full dispersive deformation. See Section~\ref{2} for the relevant definitions. 

In the case of a single independent variable, $D=1$, the Poisson cohomology of a DN type Poisson bracket has been shown to vanish, in positive degree, by Getzler~\cite{g02}. The main consequence of this result is that all deformations of one-dimensional DN type Poisson brackets are trivial, i.e., they are Miura equivalent to their dispersionless limit. Independent proofs of the triviality of the deformation problems in the case $D=1$ have been obtained within different frameworks \cite{dms05,dz01,dsk14}. Moreover, some first results for $D=2$ have been published by one of the authors \cite{c14}.

In the scalar case $N=1$ the general form of a multidimensional DN type  Poisson bracket is~\cite{m88}
\begin{equation}  \label{scalpb}
\{ u(x) , u(y) \} = g(u(x)) c^i \frac{\partial }{\partial x^i} \delta(x-y) + \frac12 g'(u(x)) c^i \frac{\partial u}{\partial x^i}(x) \delta(x-y) 
\end{equation}
where $g(u)$ is a non-vanishing function and $c^i$ are constants, with $i=1, \dots , D$.

Our main result is the computation of the full Poisson cohomology of the Poisson bracket~\eqref{scalpb}, in a quite implicit form, see Theorem~\ref{mainthm}. 
As a consequence of this result, we obtain the cohomology groups of low degree, which are relevant for the deformation theory, for some values of $D$, see Section~\ref{3}. 
We find in particular that for $D>1$ the second and third Poisson cohomology groups are generically non-vanishing. 
Therefore (formal) infinitesimal deformations of~\eqref{scalpb} are still parametrized by a class in $H^2(\hcF)$, although this class is in general non-homogeneous in the standard degree, but no general statement can be made about the possibility of extending such infinitesimal deformation to a full deformation of the form~\eqref{def}, even in the homogeneous case. See Section~\ref{27} for further details. 

Interesting problems to consider at this stage would be: the study of how particular examples of full dispersive Poisson brackets of DN type in $D>1$ fit in this scenario; and the formulation of classification and existence theorems for certain subclasses of homogeneous deformations.

The paper is organized as follows: in Section 2 we introduce the main tools of formal multidimensional variational calculus. In Section 3 we specialise to the case of a single dependent variable, and we prove our main Theorem. In Section 4 we explicitly compute a few homogeneous components of the first and second cohomology groups, using the formalism of Poisson Vertex Algebras.

\subsubsection*{Acknowledgment} G.~C.~and S.~S.~were supported by the Netherlands Organization for Scientific Research. G.~C.~and M.~C.~were partly supported by Young Researcher Project 2014 ``Geometric and analytic aspects of integrable systems'' by the Italian Institute for Higher Mathematics, Group of Mathematical Physics.

\section{Functional variational calculus and deformations in the multidimensional case}
\label{2}

In this Section we introduce the basic notions of local multivectors,  Schouten-Nijenhuis brackets, $\theta$ formalism and Miura transformations in the case of $D$ independent and $N$ dependent variables. In \S 2.1-2.5 we give the $D\geq1$ version of some basic constructions of~\cite{lz11, lz13}, omitting the generalisations of most of the proofs, which will be addressed in a subsequent publication. In \S 2.6 we introduce short sequences related with partial integrations, which are a peculiar feature of the $D>1$ case, and we prove that they are exact, adapting the proof of exactness of the variational bicomplex~\cite{anderson}. In \S 2.7 we consider the relation between deformation theory and Poisson cohomology.

\subsection{Multi-index notation}
Let us introduce the multi-index notation as follows. Denote by $\xi_1, \dots \xi_D$ the standard basis of the semiring $Z:=\Z_{\geq0}^D$. Let a multi-index $S=\sum_{i=1}^D s_i \xi_i$ be an arbitrary element of $Z$. The length of the multi-index $S$ is $|S| = \sum_{i=1}^D s_i$. 
We denote by $Z_j \subset Z$ the set of multi-indices $S=\sum_{i=1}^D s_i \xi_i$ with $s_j =0$. 
We denote by $\partial^S$ the operator $\partial_{x^1}^{s_1} \cdots \partial_{x^D}^{s_D}$. 
Sums over repeated indices and multi-indices are assumed.

\subsection{Local multivectors}

Using the multi-index notation, the space of differential polynomials is
\begin{equation}
\cA = C^\infty (U) [[ \{ u^{\alpha, S} , \alpha=1,\dots,N, |S|>0 \} ]],
\end{equation}
where we denote $u^{\alpha,S} = \partial^S u^\alpha$. 
The standard gradation $\deg$ on $\cA$ is given by $\deg u^{\alpha, S} = |S|$. We denote $\cA_d$ the homogeneous component of degree $d$. 

On $\cA$ we define commuting derivations $\partial_{x^i}$ for $i=1, \dots , D$ by 
\begin{equation}
\partial_{x^i} = \sum_{\alpha, S} u^{\alpha, S+\xi_i} \frac{\partial }{\partial u^{\alpha, S} }.
\end{equation}

The elements of the quotient
\begin{equation}
\cF = \frac{\cA}{\partial_{x^1} \cA + \cdots + \partial_{x^D}\cA}
\end{equation}
are called local functionals. 
The standard gradation on $\cA$ induces a standard gradation on $\cF$, since the operators $\partial_{x^i}$ are homogeneous. 
We denote the projection map from $\cA$ to $\cF$ as a multiple integral, which associates to $f\in\cA$ the element
\begin{equation}
\int^D f \ d^Dx
\end{equation}
in $\cF$.

The variational derivative of  a local functional $F=\int^D f \ d^Dx$ is defined as 
\begin{equation} \label{varu}
\frac{\delta F}{\delta u^\alpha} = \sum_S (-1)^{|S|} \partial^S \frac{\partial f}{\partial u^{\alpha,S}} .
\end{equation}
One can easily prove that
\begin{equation} \label{commupa}
\left[ \frac{\partial }{\partial u^{\alpha,S}}, \partial_{x^j} \right] = \frac{\partial }{\partial u^{\alpha,S-\xi_j}}, \text{ if $S=\sum_i s_i \xi_i$ with $s_j>0$},
\end{equation}
and is equal to zero otherwise, and
\begin{equation} \label{varpa}
\frac{\delta}{\delta u^\alpha} \partial_{x^i} =0 .  
\end{equation}
From this in particular it follows that the variational derivative of $\int^D f \ d^Dx$ does not depend on the choice of the density $f$.

A local $p$-vector $P$ is a linear $p$-alternating map from $\cF$ to itself of the form
\begin{equation} \label{pvect}
P(I_1, \dots ,I_p) = \int^D P^{\alpha_1 , \dots , \alpha_p}_{S_1, \dots , S_p} \ \partial^{S_1}  \left( \frac{\delta I_1}{\delta u^{\alpha_1}} \right) \cdots  \partial^{S_p} \left( \frac{\delta I_p}{\delta u^{\alpha_p}} \right)
 \ d^Dx
\end{equation}
where $P^{\alpha_1 , \dots , \alpha_p}_{S_1, \dots , S_p}  \in \cA$, for arbitrary $I_1, \dots, I_p \in \cF$. We denote the space of local $p$-vectors by $\Lambda^p \subset \mathrm{Alt}^p(\cF, \cF)$.


The following Lemma generalizes Lemma 2.1.7 of~\cite{lz11}.\begin{lemma} \label{217}
Let $P^\alpha \in \cA$. If 
\begin{equation}
\int^D \sum_\alpha P^\alpha \frac{\delta I}{\delta u^\alpha} \ d^Dx =0
\end{equation}
for all $I \in \cA$, then $P^\alpha = \sum_{i=1}^D c_i u^{\alpha, \xi_i}$ for some $c_i \in \R$. 
\end{lemma}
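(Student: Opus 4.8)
The plan is to recognise the pairing as the action of an evolutionary vector field on local functionals, and then to show that the only such fields acting trivially on $\cF$ are the total derivatives. First I would integrate by parts under the integral sign. Writing $I=\int^D f\,d^Dx$ and inserting the definition \eqref{varu} of the variational derivative, one moves every $\partial^S$ off the factor $\partial f/\partial u^{\alpha,S}$, so that
\[
\int^D \sum_\alpha P^\alpha\frac{\delta I}{\delta u^\alpha}\,d^Dx=\int^D \partial_P f\,d^Dx,\qquad \partial_P:=\sum_{\alpha,S}(\partial^S P^\alpha)\frac{\partial}{\partial u^{\alpha,S}},
\]
the evolutionary vector field with characteristic $P$. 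Hence the hypothesis is equivalent to the statement that $\partial_P f\in\partial_{x^1}\cA+\cdots+\partial_{x^D}\cA$ for every $f\in\cA$, i.e. that $\partial_P$ descends to the zero derivation of $\cF$. The asserted family is exactly the kernel of this descent: for $P^\alpha=\sum_i c_i u^{\alpha,\xi_i}$ one has $\partial_P=\sum_i c_i\partial_{x^i}$, which annihilates $\cF$ by definition, so the real content of the Lemma is the reverse inclusion.

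Next I would convert this into a pointwise condition. By the exactness of the variational complex established in \S2.6, a density lies in $\sum_i\partial_{x^i}\cA$ precisely when all its variational derivatives vanish, up to the additive constants in $\ker(\delta/\delta u^\gamma)$; this last ambiguity is harmless here, since applying the derivation property to $f^2$ shows that if $\partial_P f\in\R$ for every $f$ then already $\partial_P f=0$. Thus the hypothesis becomes $\delta(\partial_P f)/\delta u^\gamma=0$ for all $f$ and all $\gamma$. Using the $D$-dimensional form of the commutation rule \eqref{commupa}, the variational derivative of the Lie derivative $\partial_P f$ obeys the standard identity
\[
\frac{\delta(\partial_P f)}{\delta u^\gamma}=\partial_P\Big(\frac{\delta f}{\delta u^\gamma}\Big)+\sum_\alpha(\ell_P^\dagger)^\alpha_\gamma\Big(\frac{\delta f}{\delta u^\alpha}\Big),
\]
where $(\ell_P)^\alpha_\gamma=\sum_S(\partial P^\alpha/\partial u^{\gamma,S})\partial^S$ is the linearisation of $P$ and $\dagger$ denotes the formal adjoint. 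The hypothesis says that this right-hand side vanishes for every $f$.

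Finally I would read off $P$ by induction on $\deg P$, comparing the highest-order contributions in the jet variables. As $f$ ranges over $\cA$ the tuple $(\delta f/\delta u^\alpha)_\alpha$ realises arbitrarily prescribed leading jets (self-adjointness constrains only the lower-order part), so the top contributions of the two operators $\partial_P$ and $\ell_P^\dagger$ acting on $(\delta f/\delta u^\alpha)_\alpha$ must cancel identically. This cancellation forces the part of $P^\alpha$ of degree $\geq2$ to vanish and pins the degree-one part to the constant-coefficient form $\sum_i c_i u^{\alpha,\xi_i}$; subtracting it reduces $P$ to degree zero, and the degree-zero constraint — already visible by taking $f=h(u)$, which demands that $\sum_\alpha p^\alpha(u)\,\partial_{u^\alpha}h$ be a total divergence for every $h$ — forces the remaining $p^\alpha(u)$ to vanish. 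I expect the main obstacle to be precisely this last analysis: organising the multi-index Leibniz expansion produced by \eqref{commupa} carefully enough to determine exactly which first-order constant-coefficient characteristics survive, and to exclude spurious lower-degree solutions.
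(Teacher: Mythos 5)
First, a point of reference: the paper itself does not prove Lemma~\ref{217}. It is stated as the $D\geq 1$ generalisation of Lemma 2.1.7 of Liu--Zhang, and the introduction to Section~\ref{2} says explicitly that the generalisations of most proofs in \S 2.1--2.5 are omitted and deferred to a later publication. So there is no in-paper argument to compare yours against, and your proposal must stand on its own. Its first two thirds do: the integration by parts rewriting the pairing as $\int^D \partial_P f\, d^Dx$ with $\partial_P=\sum_{\alpha,S}(\partial^S P^\alpha)\,\partial/\partial u^{\alpha,S}$ is correct; the reformulation of the hypothesis as ``$\partial_P f$ is a total divergence for every $f$'' is immediate from the definition of $\cF$ (and for the direction you actually use you do not need any exactness theorem: $\partial_P f\in\sum_i\partial_{x^i}\cA$ gives $\delta(\partial_P f)/\delta u^\gamma=0$ directly from \eqref{varpa}); and the commutation identity $\frac{\delta}{\delta u^\gamma}(\partial_P f)=\partial_P\bigl(\frac{\delta f}{\delta u^\gamma}\bigr)+(\ell_P^\dagger)^\alpha_\gamma\bigl(\frac{\delta f}{\delta u^\alpha}\bigr)$ is a standard and correct consequence of \eqref{commupa}.

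The gap is in the final step, which is where the entire content of the lemma lives. Two things go wrong there. First, the claim that the tuples $(\delta f/\delta u^\alpha)_\alpha$ realise ``arbitrarily prescribed leading jets,'' with self-adjointness constraining only lower-order terms, is false: the Helmholtz conditions constrain the principal part as well (already for $N=D=1$ no Euler derivative has odd top order, since $\partial_x^{2k+1}$ is anti-self-adjoint), so one cannot simply ``cancel top contributions against arbitrary leading data'' and must instead exhibit a concrete family of test functionals rich enough to separate. Second, even granting enough test data, the decisive deduction --- that the degree $\geq 2$ part of $P^\alpha$ vanishes and that the degree-one part is forced to be diagonal with constant coefficients, $\sum_i c_i\,\delta^\alpha_\beta u^{\beta,\xi_i}$ rather than a general $A^\alpha_{\beta i}(u)\,u^{\beta,\xi_i}$ --- is asserted, not carried out; ruling out, say, $P^1=u^{2,\xi_1}$, $P^2=0$ for $N=2$ already requires testing against $I=\int^D\tfrac12(u^1)^2\,d^Dx$ and checking that $u^1u^{2,\xi_1}$ is not a total divergence. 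A cleaner and more elementary route, closer to Liu--Zhang's original argument, is to bypass the Euler-operator machinery and feed explicit functionals ($\int^D u^\alpha\,d^Dx$, $\int^D\tfrac12(u^\alpha)^2\,d^Dx$, $\int^D u^\alpha u^{\beta,S}\,d^Dx$, and so on) into the hypothesis, inducting on the top jet order of $P^\alpha$; this produces the required cancellations by hand and makes the constant-coefficient, diagonal conclusion visible.
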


\subsection{The $\theta$ formalism}

Let $\hcA$ be the algebra of formal power series in the commutative variables $u^{\alpha,S}$, $|S|>0$ and anticommutative variables 
$\theta_\alpha^S$, $|S|\geq0$ with coefficients given by smooth functions on $U$, i.e.,
\begin{equation}
\hcA := C^\infty (U) [[ \left\{ u^{\alpha,S} , |S|>0\right\}\cup \left\{ \theta_\alpha^S, |S|\geq0 \right\}]] .
\end{equation}
The standard gradation $\deg$ and the super gradation $\deg_\theta$ of $\hcA$ are defined by setting
\begin{equation}
\deg u^{\alpha,S} =\deg \theta_\alpha^S = |S|, \qquad 
\deg_\theta u^{\alpha,S} = 0 , \qquad \deg_\theta \theta_\alpha^S = 1 .
\end{equation}
We denote $\hcA_d$, resp. $\hcA^p$, the homogeneous components of standard degree $d$, resp. super degree $p$, while $\hcA_d^p := \hcA_d \cap \hcA^p$. Clearly $\hcA^0 =\cA$.

The commuting derivations $\partial_{x^i}$ for $i=1, \dots , D$ are extended to $\hcA$ by 
\begin{equation}
\partial_{x^i} = \sum_{\alpha, S} \left( u^{\alpha, S+\xi_i} \frac{\partial }{\partial u^{\alpha, S} } + \theta_\alpha^{S+\xi_i} \frac{\partial }{\partial \theta_\alpha^S} \right).
\end{equation}
Note that the kernel of $\partial_{x^i}$ on $\hcA$ is $\R$.

We denote by $\hcF$ the quotient of $\hcA$ by the subspace $\partial_{x^1} \hcA + \cdots + \partial_{x^D} \hcA$, and by a multiple integral $\int^D \cdot \ d^Dx$ the projection map from $\hcA$ to $\hcF$. Since the derivations $\partial_{x^i}$ are homogeneous, i.e., $\deg \partial_{x^i} =1$ and $\deg_\theta \partial_{x^i} =0$, $\hcF$ inherits both gradations of $\hcA$.  

Equations~\eqref{commupa} and~\eqref{varpa} hold and, similarly,
\begin{equation}\label{commupatheta}
\left[ \frac{\partial }{\partial \theta_{\alpha}^{S}}, \partial_{x^j} \right] = \frac{\partial }{\partial \theta_{\alpha}^{S-\xi_j}}, \text{ if $S=\sum_i s_i \xi_i$ with $s_j>0$},
\end{equation}
and is equal to zero otherwise. It follows that the variational derivative
\begin{equation} \label{varde}
\frac{\delta}{\delta \theta_\alpha} = \sum_{\alpha, S} (-1)^{|S|} \partial^{S} \frac{\partial }{\partial \theta_\alpha^S} 
\end{equation}
satisfies
\begin{equation}\label{varderpa}
\frac{\delta}{\delta \theta_\alpha} \partial_{x^i} = 0.
\end{equation}
Hence both variational derivatives~\eqref{varpa} and~\eqref{varde} define  maps from $\hcF$ to $\hcA$.

\begin{proposition}
The space of local multi-vectors $\Lambda^p$ is isomorphic to $\hcF^p$ for $p\not=1$. Moreover
\begin{equation}
\Lambda^1 
\cong \frac{\hcF^1}{\oplus_i \R \int u^{\alpha, \xi_i} \theta_\alpha} 
\cong \frac{\der'(\cA)}{\oplus_i \R \partial_{x^i}},
\end{equation}
where $\der'(\cA)$ denotes the space of derivations of $\cA$ that commute with $\partial_{x^i}$, for $i=1,\dots ,D$, and $\der'(\cA) \cong \hcF^1$.
\end{proposition}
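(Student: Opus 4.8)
The plan is to set up, in every super-degree $p$, the correspondence that replaces each factor $\partial^{S}\bigl(\delta I/\delta u^\alpha\bigr)$ occurring in~\eqref{pvect} by the anticommuting symbol $\theta_\alpha^S$. Concretely I would define a map $\Phi\colon\hcF^p\to\mathrm{Alt}^p(\cF,\cF)$ by representing a class by a density $f=f^{\alpha_1\cdots\alpha_p}_{S_1\cdots S_p}\theta_{\alpha_1}^{S_1}\cdots\theta_{\alpha_p}^{S_p}$ with coefficient array antisymmetric under simultaneous permutation of the pairs $(\alpha_j,S_j)$, and setting
\begin{equation}
\Phi([f])(I_1,\dots,I_p)=\int^D f^{\alpha_1\cdots\alpha_p}_{S_1\cdots S_p}\,\partial^{S_1}\Bigl(\frac{\delta I_1}{\delta u^{\alpha_1}}\Bigr)\cdots\partial^{S_p}\Bigl(\frac{\delta I_p}{\delta u^{\alpha_p}}\Bigr)\,d^Dx .
\end{equation}
The anticommutativity of the $\theta_\alpha^S$ makes $\Phi([f])$ alternating and makes it depend only on the antisymmetric part of the coefficient array, so the image of $\Phi$ is exactly $\Lambda^p$ and surjectivity onto $\Lambda^p$ is automatic. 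Two points remain: that $\Phi$ descends to the quotient $\hcF^p$, i.e.\ kills total $x$-derivatives of densities, and the determination of $\ker\Phi$. The first is a routine integration by parts, moving the derivative in $\partial_{x^i}g$ onto the factors $\partial^{S_j}(\delta I_j/\delta u^{\alpha_j})$ and observing that the result is a total $x$-derivative, hence vanishes under $\int^D\!\cdot\,d^Dx$. Identifying $\ker\Phi$ is the heart of the matter, and is where $p=1$ will behave differently from every other degree.

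I would first dispose of the cleanest statement, $\der'(\cA)\cong\hcF^1$. A derivation $X$ of $\cA$ commuting with all $\partial_{x^i}$ is determined by the tuple $(X(u^\alpha))_\alpha\in\cA^N$, since $X(u^{\alpha,S})=\partial^SX(u^\alpha)$ and $X$ acts on the coefficient functions through $\partial/\partial u^\alpha$; conversely every tuple defines such a derivation, so $\der'(\cA)\cong\cA^N$. On the $\hcF^1$ side, integrating all derivatives off the single $\theta$ shows that every class has a representative $\tilde P^\alpha\theta_\alpha$ with $\tilde P^\alpha\in\cA$, so $(\tilde P^\alpha)_\alpha\mapsto[\int^D\tilde P^\alpha\theta_\alpha\,d^Dx]$ is surjective onto $\hcF^1$; since the computation $\frac{\delta}{\delta\theta_\alpha}\int^D\tilde P^\beta\theta_\beta\,d^Dx=\tilde P^\alpha$ together with~\eqref{varderpa} exhibits a left inverse, this map is a bijection $\cA^N\cong\hcF^1$. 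Composing, $\der'(\cA)\cong\cA^N\cong\hcF^1$, and along this chain the generator $\partial_{x^i}$, whose tuple is $(u^{\alpha,\xi_i})_\alpha$, corresponds to $\int^D u^{\alpha,\xi_i}\theta_\alpha\,d^Dx$.

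For $\Lambda^1$ I would apply Lemma~\ref{217} to $\Phi$ in degree one. With a class written as $[\tilde P^\alpha\theta_\alpha]$ as above, $\Phi([\tilde P^\alpha\theta_\alpha])(I)=\int^D\tilde P^\alpha\,(\delta I/\delta u^\alpha)\,d^Dx$, so $\Phi([\tilde P^\alpha\theta_\alpha])=0$ if and only if this integral vanishes for all $I$, which by Lemma~\ref{217} happens exactly when $\tilde P^\alpha=\sum_i c_i u^{\alpha,\xi_i}$ with $c_i\in\R$. Hence $\ker\Phi=\oplus_i\R\int u^{\alpha,\xi_i}\theta_\alpha$, and combining with the previous paragraph yields simultaneously $\Lambda^1\cong\hcF^1/\oplus_i\R\int u^{\alpha,\xi_i}\theta_\alpha$ and $\Lambda^1\cong\der'(\cA)/\oplus_i\R\partial_{x^i}$.

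Finally, for $p\neq1$ the assertion $\Lambda^p\cong\hcF^p$ amounts to $\ker\Phi=0$. The case $p=0$ is the tautology $\hcF^0=\cF$, local $0$-vectors being local functionals. For $p\geq2$ the crux, and the main obstacle, is a multilinear extension of Lemma~\ref{217}: if an antisymmetric coefficient array $Q^{\alpha_1\cdots\alpha_p}_{S_1\cdots S_p}$ satisfies $\int^D Q^{\alpha_1\cdots\alpha_p}_{S_1\cdots S_p}\partial^{S_1}(\delta I_1/\delta u^{\alpha_1})\cdots\partial^{S_p}(\delta I_p/\delta u^{\alpha_p})\,d^Dx=0$ for all $I_1,\dots,I_p$, then $Q^{\alpha_1\cdots\alpha_p}_{S_1\cdots S_p}\theta_{\alpha_1}^{S_1}\cdots\theta_{\alpha_p}^{S_p}\in\sum_i\partial_{x^i}\hcA$, so its class vanishes in $\hcF^p$. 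I would prove this by probing the coefficients with functionals $I_j$ whose variational derivatives and their $x$-derivatives can be prescribed independently in each slot, reducing slot by slot to the vanishing of $Q$ modulo total derivatives exactly as in Lemma~\ref{217}. The reason $p=1$ is genuinely exceptional is visible here: for a single slot one has the divergence identity $\int^D u^{\alpha,\xi_i}(\delta I/\delta u^\alpha)\,d^Dx=0$, which injects the first-order elements $u^{\alpha,\xi_i}$ into the kernel, whereas for $p\geq2$ there is no multilinear analogue of this identity and the only relations among the symbols $\partial^S(\delta I/\delta u^\alpha)$ detected by $\Phi$ are the total-derivative and antisymmetry relations already encoded in $\hcF^p$.
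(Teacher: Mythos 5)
Your construction coincides with the paper's: the map you call $\Phi$ is the paper's $\tilde\iota$, surjectivity is obtained from the same explicit preimage $\frac1{p!}P^{\alpha_1\dots\alpha_p}_{S_1\dots S_p}\theta_{\alpha_1}^{S_1}\cdots\theta_{\alpha_p}^{S_p}$, the $p=0$ case is the same tautology, the $p=1$ kernel is computed via Lemma~\ref{217} exactly as in the paper, and your chain $\der'(\cA)\cong\cA^N\cong\hcF^1$ matching $\partial_{x^i}$ with $\int^D u^{\alpha,\xi_i}\theta_\alpha\,d^Dx$ is the paper's identification of the three descriptions of $\Lambda^1$. Up to and including the $p=0,1$ cases your argument is complete.

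The gap is the injectivity for $p\geq2$, which is precisely the step the paper itself declines to prove (it states that the argument ``relies on further technical lemmas on differential operators'' and defers to \cite{lz11}). Your slot-by-slot reduction does not close this gap, and your final claim that ``for $p\geq2$ there is no multilinear analogue of this identity'' is the assertion to be proved, not an argument. Concretely: fix $I_2,\dots,I_p$ and integrate by parts in the first slot; the hypothesis becomes $\int^D \tilde Q^{\alpha}\,(\delta I_1/\delta u^{\alpha})\,d^Dx=0$ for all $I_1$, with $\tilde Q^\alpha$ depending on $I_2,\dots,I_p$, and Lemma~\ref{217} then yields $\tilde Q^{\alpha}=\sum_i c_i\,u^{\alpha,\xi_i}$ where the $c_i$ are a priori nonzero multilinear functionals of the remaining arguments. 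So the divergence identity \emph{does} reappear in every slot of the multilinear setting, and one must show that these contributions, together with the freedom of adding total $x$-derivatives, cannot conspire to put a nonzero class of $\hcF^p$ in the kernel; this is exactly the content of the normal-form lemmas for multilinear differential operators in \cite{lz11}. A secondary issue: the variational derivatives $\delta I/\delta u^\alpha$ do not range over all of $\cA$ (they satisfy the Helmholtz conditions), so ``prescribing them independently in each slot'' itself requires justification before any kernel analysis can begin.
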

\begin{remark} 
Let us give here some remark on the proof of this Proposition, following the argument of~\cite{lz11}. 
Notice that for $p=0$, the isomorphism is trivial, since $\hcF^0 = \cF = \Lambda^0$. Let us assume instead that $p\geq1$. 
Given $P\in\hcF^p$, and arbitrary $I_1, \dots, I_p \in \cF$, let
\begin{equation}\label{defIota}
\iota (P)(I_1, \dots ,I_p) = \frac{\partial }{\partial \theta_{\alpha_p}^{S_p} }\cdots \frac{\partial }{\partial \theta_{\alpha_1}^{S_1}} P \cdot
 \partial^{S_1} \left(\frac{\delta I_1}{\delta u^{\alpha_1}}\right) \cdots \partial^{S_p}\left( \frac{\delta I_p}{\delta u^{\alpha_p}} \right).
\end{equation}
Clearly $\iota(P)$ is an $p$-alternating map from $\cF$ to $\cA$, and it satisfies
\begin{equation}
\iota(\partial_{x^i} P)(I_1, \dots , I_p) = \partial_{x^i} (\iota(P)(I_1, \dots , I_p) ) .
\end{equation}
The desired map $\tilde{\iota}$ from $\hcF^p$ to $\Lambda^p$ is  then defined by 
\begin{equation}
\tilde{\iota}\left( \int^D P \ d^Dx\right) = \int^D \iota(P) \ d^Dx .
\end{equation}
Surjectivity of $\tilde{\iota}$ is easy to see; indeed the local $p$-vector~\eqref{pvect} is the image through $\iota$ of
\begin{equation}
P = \frac1{p!}  P^{\alpha_1 , \dots , \alpha_p}_{S_1, \dots , S_p} \theta_{\alpha_1}^{S_1} \cdots \theta_{\alpha_p}^{S_p}.
\end{equation}
In the case $p=1$, an element $P$ of $\hcF^1$ can be written uniquely as
\begin{equation}
P = \int^D P^\alpha \theta_\alpha \ d^Dx
\end{equation}
for $P^\alpha \in \cA$. The map that sends $P$ to 
\begin{equation}
\partial_P = \sum_S \partial^S P^\alpha \frac{\partial }{\partial u^{\alpha, S}} 
\end{equation}
defines an isomorphism $\hcF^1 \cong \der'(\cA)$. Notice that 
\begin{equation}
\tilde{\iota}(P)(I) = \int^D \partial_P (I) \ d^Dx. 
\end{equation}
Clearly, for each $i$, the derivation associated with $P=\int^D \sum_\alpha u^{\alpha, \xi_i} \theta_\alpha \ d^Dx$ corresponds to $\partial_P = \partial_{x^i}$, therefore is in the kernel of $\tilde{\iota}$. From Lemma~\ref{217} it follows that the kernel of $\tilde{\iota}$ is indeed generated by these elements. 

It remains to show that $\tilde{\iota}$ is injective for $p\geq 2$, which can be done essentially by adapting the proof given in~\cite{lz11} to the present case. Since this argument relies on further technical lemmas on differential operators we prefer to skip it in the context of this paper. 
\end{remark}

\subsection{The Schouten-Nijenhuis bracket}

The Schouten-Nijenhuis bracket 
\begin{equation}
[,]:\hcF^p \times \hcF^q \to \hcF^{p+q-1}
\end{equation}
is defined as  
\begin{equation}\label{defsch}
[P , Q ] = \int^D \left( \frac{\delta P}{\delta\theta_\alpha} \frac{\delta Q}{\delta u^\alpha} + (-1)^p \frac{\delta P}{\delta u^\alpha} \frac{\delta Q}{\delta \theta_\alpha} \right) \ d^Dx .
\end{equation}

It is a bilinear map that satisfies the graded symmetry 
\begin{equation}
[P,Q] = (-1)^{pq} [Q,P]
\end{equation}
and the graded Jacobi identity
\begin{equation}
(-1)^{pr} [[P,Q],R] + (-1)^{qp} [[Q,R],P] + (-1)^{rq} [[R,P],Q] =0 
\end{equation}
for arbitrary $P\in\hcF^p$, $Q\in\hcF^q$ and $r\in\hcF^r$.


A bivector $P\in\hcF^2$ is a Poisson structure when $[P,P]=0$. In such case $d_P := ad_P = [P, \cdot ]$ squares to zero, as a consequence of the graded Jacobi identity, and the cohomology of the complex $(\hcF, d_P)$ is called Poisson cohomology of $P$.

\subsection{The differential on $\hcA$}

Given an element $P \in \hcF^p$ we define the following differential operator on $\hcA$
\begin{equation}\label{eq:defDP}
D_P = \sum_S \left( \partial^S \left( \frac{\delta P}{\delta \theta_\alpha} \right) \frac{\partial }{\partial u^{\alpha,S}}  + (-1)^p \partial^S \left( \frac{\delta P}{\delta u^\alpha} \right) \frac{\partial }{\partial \theta_\alpha^S}  \right) .
\end{equation}
Since $[D_P, \partial_{x^i} ] =0$ for all $i=1, \dots , D$, the operator $D_P$ descends to an operator on $\hcF$ which is given by the adjoint action $ad_P = [P, \cdot]$ of $P$ on $\hcF$ via the Schouten-Nijenhuis bracket, i.e.,
\begin{equation}
ad_P \left( \int^D Q \ d^Dx  \right)= \int^D D_P (Q) \ d^Dx,
\end{equation}
for $Q \in \hcA$. This can be easily checked by integration by parts. 

\begin{remark}
It can be proved by an explicit computation that
\begin{equation} \label{morph}
D_{[P,Q]} = (-1)^{p+1} [D_P, D_Q] 
\end{equation}
which holds for $P\in\hcF^p$ and $Q\in\hcF^q$, where the brackets on the righthand-side represent the graded commutator that induces a graded Lie algebra structure on the space of graded derivations to which $D_P$ belongs, see~\cite{lz13}, i.e.,
\begin{equation}
[ D_{P} , D_{Q} ] = D_P D_Q -  (-1)^{(p-1)(q-1)} D_Q D_P .
\end{equation}
It follows that, if $P\in\hcF^2$ is such that $[P,P]=0$, then $D_P^2=0$; 
in other words, if $P\in\hcF^2$ is a Poisson structure,  then $D_P$ squares to zero, hence $(\hcA, D_P)$ is a differential complex.

We will not prove the identity~\eqref{morph} here, since in our specific case, where $P$ is given by formula~\eqref{deltaop},  the fact that $D_P^2=0$ simply follows from a trivial computation, see next Section. 
\end{remark}

\subsection{Partial integrations}

A crucial role in our construction is played by the following Lemma.
\begin{lemma} \label{lemma:kernel-di-zero}
The sequences
\begin{equation}\label{eq:Seq}
\begin{array}{rcccccccl}
0 & \to & \hat{\cA}/\R & \xrightarrow{\d_{x^1}} & \hat{\cA} & \xrightarrow{\int dx^1} & \hat{\cF}_1 &  \to & 0 \\
0 & \to & \hat{\cF}_1/\R & \xrightarrow{\d_{x^2}} & \hat{\cF}_1 & \xrightarrow{\int dx^2} & \hat{\cF}_2 &  \to & 0 \\
0 & \to & \hat{\cF}_2/\R & \xrightarrow{\d_{x^3}} & \hat{\cF}_2 & \xrightarrow{\int dx^3} & \hat{\cF}_3 &  \to & 0 \\
& & \vdots & & \vdots & & \vdots & & \\
0 & \to & \hat{\cF}_{D-1}/\R & \xrightarrow{\d_{x^D}} & \hat{\cF}_{D-1} & \xrightarrow{\int dx^D} & \hat{\cF}_D &  \to & 0 
\end{array}
\end{equation}
where
\begin{equation}
\hcF_i = \frac{\hcA}{\partial_{x^1} \hcA + \cdots + \partial_{x^i} \hcA }
\end{equation}
are exact.
\end{lemma}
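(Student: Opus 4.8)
The plan is to reduce the statement to a single nontrivial fact and then to establish that fact by adapting the horizontal homotopy operators of the variational bicomplex.

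First I would observe that, in each of the displayed sequences, exactness at the middle and right terms is purely formal. Writing $V_i:=\partial_{x^1}\hcA+\cdots+\partial_{x^i}\hcA$, so that $\hcF_i=\hcA/V_i$, the definition of the quotients gives directly $\hcF_i=\hcF_{i-1}/\partial_{x^i}\hcF_{i-1}$. Hence the projection $\int dx^i\colon\hcF_{i-1}\to\hcF_i$ is surjective, which is exactness on the right; its kernel is by construction $\partial_{x^i}\hcF_{i-1}$, which is exactly the image of the left map $\partial_{x^i}\colon\hcF_{i-1}/\R\to\hcF_{i-1}$, which is exactness in the middle. The left map itself is well defined because $\partial_{x^i}$ commutes with $\partial_{x^1},\dots,\partial_{x^{i-1}}$, hence preserves $V_{i-1}$, and because $\partial_{x^i}\R=0$. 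Everything therefore reduces to exactness at the leftmost term, i.e. to injectivity of $\partial_{x^i}$ on $\hcF_{i-1}/\R$; unwinding the quotient this is the statement
\[
\text{if } f\in\hcA \text{ satisfies } \partial_{x^i}f\in V_{i-1}, \text{ then } f\in V_{i-1}+\R .
\]

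Since $\partial_{x^i}$ and the subspaces $V_{i-1}$ are homogeneous for the standard gradation, I would prove this degree by degree. In degree $0$ the claim is elementary: an $f\in\hcA_0$ produces in $\partial_{x^i}f$ only the degree-one variables $u^{\alpha,\xi_i},\theta_\alpha^{\xi_i}$, whereas the degree-one part of $V_{i-1}$ involves only $u^{\alpha,\xi_j},\theta_\alpha^{\xi_j}$ with $j<i$; as these are independent, $\partial_{x^i}f\in V_{i-1}$ forces $\partial_{x^i}f=0$, and hence $f\in\R$ since $\ker\partial_{x^i}=\R$ on $\hcA$. In positive degree the constant drops out, and it remains to show that $f\in\hcA_d$ with $d\ge1$ and $\partial_{x^i}f\in V_{i-1}$ imply $f\in V_{i-1}$.

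For the positive-degree statement I would adapt the horizontal homotopy of the variational bicomplex, organising the induction by integrating out one direction at a time. The model case $D=2$, $i=2$ already displays the whole mechanism: given $\partial_{x^2}f=\partial_{x^1}g$, apply the Euler operator $E_1$ in the $x^1$ variable (with respect to both the $u$ and the $\theta$ fields). Using $E_1\partial_{x^1}=0$ and $[E_1,\partial_{x^2}]=0$ one gets $\partial_{x^2}(E_1f)=0$, hence $E_1f\in\R$, so $E_1f=0$ for $d\ge2$ (the case $d=1$ is a direct finite check); the one-variable theorem then gives $f=\partial_{x^1}h+a$ with $a$ in the $x^1$-weight-zero subalgebra (variables $u^{\alpha,S},\theta_\alpha^S$ with $S\in Z_1$). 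Substituting back yields $\partial_{x^2}a=\partial_{x^1}(g-\partial_{x^2}h)$, whose left side has $x^1$-weight $0$ and right side $x^1$-weight $\ge1$; both must vanish, so $\partial_{x^2}a=0$, giving $a\in\R$ and $f\in\partial_{x^1}\hcA+\R$. For general $i$ the same step—apply $E_1$, kill the $\partial_{x^1}$ summand, use commutation with the remaining $\partial_{x^j}$ and the weight grading—reduces the equation to one of the same shape with one fewer derivation, living in the $x^1$-weight-zero subalgebra, so that one can induct on $i$ (equivalently on the number of independent variables), feeding in the single-direction fact $\ker\partial_{x^j}=\R$ at each stage.

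The main obstacle is precisely this inductive step: producing explicit homotopy and Euler operators in the multi-index, $\theta$-graded setting, and verifying the commutation relations $[E_1,\partial_{x^j}]=0$ and $[\,h,\partial_{x^j}\,]=0$ in the remaining directions, so that the operators descend to the quotients $\hcF_{i-1}$ and the reduction to a lower-dimensional problem is legitimate. This bookkeeping is what makes the adaptation of Anderson's proof nontrivial; the short-sequence formulation of the lemma is arranged so that the only analytic input needed at each stage is the degree-controlled kernel statement $\ker\partial_{x^i}=\R$.
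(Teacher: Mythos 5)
Your formal reductions are exactly right and coincide with the paper's: surjectivity and exactness in the middle are immediate from $\hcF_i=\hcF_{i-1}/\partial_{x^i}\hcF_{i-1}$, and everything comes down to showing that $\partial_{x^a}f\in\partial_{x^1}\hcA+\cdots+\partial_{x^{a-1}}\hcA$ forces $f\in\partial_{x^1}\hcA+\cdots+\partial_{x^{a-1}}\hcA+\R$. The gap is in your mechanism for proving this. The commutation relation $[E_1,\partial_{x^j}]=0$ on which your reduction hinges is false: writing $E_1^{(\alpha,S'')}=\sum_{k\geq0}(-\partial_{x^1})^k\,\partial/\partial u^{\alpha,S''+k\xi_1}$ for the $x^1$-direction Euler operator attached to the dependent variable $u^{(\alpha,S'')}$, relation \eqref{commupa} gives $[E_1^{(\alpha,S'')},\partial_{x^j}]=E_1^{(\alpha,S''-\xi_j)}$ for $j\neq1$, which vanishes only when $(S'')_j=0$. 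In the two-direction case $a=2$ this is harmless: $E_1(\partial_{x^2}f)=0$ unwinds into the triangular recursion $\partial_{x^2}\bigl(E_1^{(\alpha,S'')}f\bigr)=-E_1^{(\alpha,S''-\xi_2)}f$, and since the image of $\partial_{x^2}$ contains no nonzero constants one still deduces $E_1f=0$ and hence $f\in\partial_{x^1}\hcA+\R$; so your model computation survives once the argument is repaired. But for $a\geq3$ the step breaks down: applying $E_1$ to $\partial_{x^a}f=\sum_{j=1}^{a-1}\partial_{x^j}g_j$ kills only the term $\partial_{x^1}g_1$, while the surviving terms $E_1(\partial_{x^j}g_j)$, $2\leq j\leq a-1$, neither vanish nor commute past $E_1$; what you obtain is a coupled system relating $E_1f$ and the $E_1g_j$, of the same multi-direction type, rather than ``the same shape with one fewer derivation'' for $f$ itself. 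You flag this inductive step as the ``main obstacle'', but it is precisely the content of the lemma for $D\geq3$, so as written the proof is incomplete there.

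For comparison, the paper does not peel off one direction at a time. It encodes the relation $\partial_{x^a}f=\sum_{j<a}\partial_{x^j}g_j$ as a $d_H$-closed $(a-1)$-form in a truncated horizontal complex with coefficients in $\hcA/\R$, refines the gradation into $\deg'+\deg''$ so that each summand of fixed $\deg''$ and fixed $\deg_\theta=p$ involves only finitely many dependent variables, and then establishes acyclicity in cohomological degree $a-1$ by importing Anderson's explicit $a$-dimensional homotopy contraction (formula \eqref{eq:HomotopyContraction}) for $p>0$ and Anderson's Proposition 4.3 for $p=0$. If you want to keep your one-variable-at-a-time strategy, you would have to construct the analogous multi-direction homotopy and verify its compatibility with the remaining $\partial_{x^j}$ yourself, which amounts to reconstructing Anderson's operator; alternatively, adopt the auxiliary-complex formulation, which packages the several $g_j$ into a single closed form and avoids the coupled recursion entirely.
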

\begin{proof}
The exactness of the first line is obvious: by quotienting out the kernel of $\partial_{x^1}$, which is given by constants, we have injectivity on the left side, while $\int dx^1$ just denotes the projection to the quotient $\hcF_1$, which is indeed surjective. 

Let us consider the second line. The derivation $\partial_{x^2}$ on $\hcA$ commutes with $\partial_{x^1}$, hence defines a map on $\hcF_1$ which we denote with the same symbol. Notice that 
\begin{equation}
\frac{\hcF_1}{\partial_{x^2} \hcF_1} = \frac{\hcA}{\partial_{x^1} \hcA + \partial_{x^2} \hcA} = \hcF_2,
\end{equation}
therefore surjectivity is guaranteed.

The same argument works for the rest of the lines, since it is easy to check that
\begin{equation}
\frac{\hcF_i}{\partial_{x^{i+1}} \hcF_i} = \frac{\hcA}{\partial_{x^1} \hcA +\cdots+ \partial_{x^{i+1}} \hcA} = \hcF_{i+1}. 
\end{equation}

It remains to be proved that the map induced by $\partial_{x^a}$ on $\hcF_{a-1}$ has kernel given by $\R$, for $a=2, \dots , D$. In order to do this we reformulate this property as the vanishing of the cohomology of some auxiliary complex, and then we construct an explicit homotopy contraction for that auxiliary complex that implies the vanishing of its cohomology.

The map induced by $\partial_{x^a}$ on $\hcF_{a-1}$ has only constants in the kernel if and only if for any function $f\in \hcA$ the conditions $\partial_{x^a} f \in \partial_{x^1}\hcA+\cdots+\partial_{x^{a-1}}\hcA$ and $f$ is non-constant imply that $f\in \partial_{x^1}\hcA+\cdots+\partial_{x^{a-1}}\hcA$. Consider the following complex:
\begin{equation}\label{eq:auxiliary-complex}
0\to\Omega^0\xrightarrow{d_H} \Omega^1 \xrightarrow{d_H} \cdots \xrightarrow{d_H} \Omega^{a-1}\xrightarrow{d_H} \Omega^a,
\end{equation}
where $\Omega^j$, $0\leq j\leq a$, is the space of local differential $j$-forms with coefficients in $\hcA/\mathbb{R}$, that is,
\begin{equation}
\Omega^j:= \bigoplus_{\substack {I \subset \{1,\dots,a\}, |I|=j \\ I=\{i_1<\dots<i_j\}}} \hcA/\mathbb{R} \cdot dx^{i_1}\wedge\cdots \wedge dx^{i_j},
\end{equation}
and the differential $d_H$ is equal to $\sum_{i=1}^a dx^i\wedge \partial_{x^i}$. In terms of this complex, the condition $\partial_{x^a} f = \partial_{x^1}g_1+\cdots+\partial_{x^{a-1}}g_{a-1}$ is the same as $d_H\omega =0$, where $\omega\in\Omega^{a-1}$ is given by
\begin{equation}
\omega:=\left[f\frac{\d}{\d (dx^a)}-\sum_{i=1}^{a-1}  g_i  \frac{\d}{\d (dx^i)}\right] dx^1\wedge\cdots \wedge dx^a.
\end{equation}
The property that $f\in \partial_{x^1}\hcA+\cdots+\partial_{x^{a-1}}\hcA$ (and the similar statements for $g_i$, $i=1,\dots,a-1$, obtained by relabeling of the independent variables) is equivalent to $\omega\in d_H \Omega^{a-2}$. That is, we have to prove that the auxiliary complex~\eqref{eq:auxiliary-complex} is acyclic in cohomological degree $a-1$. 

In order to do this, we revisit the argument of Anderson, cf.~\cite[Proposition 4.2 and 4.3]{anderson}. While the horizontal rows of the variational bicomplex that is considered in~\cite{anderson} are, in general, quite different from~\eqref{eq:auxiliary-complex} (the meaning of the symbol $\theta$ is completely different),
the combinatorics of the differential $d_H$ is literally the same, which allows us to adapt the formulas of Anderson. 

The first thing we need to do is to refine the standard gradation. Namely, we represent $S\in\mathbb{Z}^D_{\geq 0}$, $|S|>0$, as the sum $S=S'+S''$, where $S'=\sum_{i=1}^a s_i\xi_i$ and $S''=\sum_{i=a+1}^D s_i\xi_i$. This way we represent $\deg$ as the sum $\deg'+\deg''$, where $\deg'=|S'|$ and $\deg''=|S''|$. 
Observe that both $\deg''$ and the super gradation $\deg_\theta$ are preserved by the differential $d_H$. This means that we can split the complex~\eqref{eq:auxiliary-complex} into the direct summands 
\begin{equation}\label{eq:auxiliary-complex-summand}
0\to\left(\Omega^p_{d''}\right)^0\xrightarrow{d_H} \left(\Omega^p_{d''}\right)^1 \xrightarrow{d_H} \cdots \xrightarrow{d_H} \left(\Omega^p_{d''}\right)^{a-1}\xrightarrow{d_H} \left(\Omega^p_{d''}\right)^a,
\end{equation}
where the coefficients have $\deg''=d''$ and $\deg_\theta = p$. 

There are two different cases, $p=0$ and $p\not=0$. In the case $p=0$, we have no $\theta$s. Also, we consider the variables $u^{\alpha,S}$ as $u^{(\alpha,S''),S'}$, that is, we introduce a new index $(\alpha,S'')$ for the dependent variables, and we take into account only the dependence on the independent variables $x^1,\dots,x^a$. In other words, we redefine the algebra $\cA$ to be 
$$
C^\infty(U)[[u^{(\alpha,S'')}, \alpha=1,\dots,N, |S''|>0]] [[u^{(\alpha,S''),S'}, |S''|>0]], 
$$
where $\{u^{(\alpha,S'')}\}$ is the new set of dependent variables (we allow $S''=0$ and identify $u^{(\alpha,0)}$ with $u^\alpha$), $\{x^1,\dots,x^a\}$ is the new set of independent variables, and $u^{(\alpha,S''),S'}=\partial^{S'}u^{(\alpha,S'')}$.  If we fix the degree $\deg''$, we still have a finite number of dependent variables, and the only difference with the standard case is that we require our functions to be homogeneous polynomials in some of them (as opposed to just smooth functions). This modification (and also the minor difference that we don't consider explicit dependence on independent variables) is the only difference  between the complex~\eqref{eq:auxiliary-complex-summand} for $p=0$ and the complex considered in the first half of~\cite[Proposition 4.3]{anderson}. It is then straightforward to see that the argument of Anderson proves that this complex is acyclic (up to cohomological degree $(a-1)$). 

It remains to prove that the complex~\eqref{eq:auxiliary-complex-summand} is acyclic in cohomological degree $\leq (a-1)$ for $p>0$. In this case we can follow ~\cite[Proof of Proposition 4.2]{anderson}. Anderson gives an explicit homotopy contraction operator that uses only the combinatorics of indices of the variables $\d^{S'}\theta_\alpha^{S''}=:\theta_{(\alpha,S'')}^{S'}$, so we can also define it as an operator $h^{p,i}\colon \left(\Omega^p_{d''}\right)^i\to \left(\Omega^p_{d''}\right)^{i-1}$. Since the bookkeeping of indices and notation in~\cite{anderson} is quite different from the one we use in this paper, let us rewrite the homotopy contraction operator explicitly in our terms, cf.~\cite[Equation (4.13)]{anderson}:
\begin{align} \label{eq:HomotopyContraction}
& h^{p,i}:= \frac{-1}{p} \sum_{j=1}^a \sum_{(\alpha,S'')} 
\sum_{\substack{f_1,\dots,f_a\geq 0 \\ g_1,\dots,g_a\geq 0}} 
\frac{f_j+g_j+1}{a-i+1+\sum_{\ell=1}^a f_\ell}\cdot 
\prod_{\ell=1}^a \binom{f_\ell+g_\ell}{f_\ell} \cdot
\\ \notag
& \left[\prod_{\ell=1}^a \partial_{x^\ell}^{f_\ell}\right]\circ\theta^0_{(\alpha,S'')}\wedge
\left[\prod_{\ell=1}^a (-\partial_{x^\ell})^{g_\ell}\right]\circ
\frac{\partial}{\partial \theta_{(\alpha,S'')}^{\xi_j +\sum_{i=1}^a (f_i+g_i)\xi_i}}\circ
\frac{\partial}{\partial \left(dx^j\right)}
\end{align}
Then the arguments of~\cite[Lemma 4.4 and Proof of Proposition 4.2]{anderson} imply that $(h^{p,i+1} d_H + d_H h^{p,i}) \omega = \omega$ for $\omega\in \left(\Omega^p_{d''}\right)^i$, $p>0$ and $i<a$. This implies that for $p>0$ the cohomology of the chain complex~\eqref{eq:auxiliary-complex-summand} in cohomological degree $\leq (a-1)$ is equal to zero. 

This completes the proof of the Lemma.
\end{proof}

\subsection{Deformations of DN brackets and Poisson cohomology}
\label{27}

Let us introduce the transformations
\begin{equation}\label{eq:Miuradef}
 u^i\mapsto \tilde{u}^i=\sum_{k=0}^\infty\epsilon^k F^i_k(u;u_I),\;i=1,\ldots,N
\end{equation}
on the space $\cA$, where $F^i_k\in\cA_k$ and
\begin{equation*}
 \det\left(\frac{\d F^i_0(u)}{\d u^j}\right)\neq0.
\end{equation*}
The transformations \eqref{eq:Miuradef} form a group who is called the \emph{Miura group} \cite{dz01}. It can be regarded as the group of local diffeomorphisms on the space $\cA$, whose Lie algebra is the algebra of the (translational invariant) vector fields on $\cA$. The transformation of the 0-th order coordinates $u^i$ is then lifted to the higher order jet variables $u^{i,S}$. The action of the elements of the Miura group is then naturally extended to the full space $\hcA$. An important subclass of Miura transformations, that plays a central role in the theory of the deformations of DN brackets, are the so-called \emph{second kind Miura deformations} \cite{lz11}, for which $F^i_0=u^i$.

\begin{definition}\label{defDefo}
 Given a Poisson bivector $P_0\in\hcF^2$, a $n$-th order infinitesimal \emph{compatible deformation} of $P_0$ is a bivector $P=P_0+\sum_{k=1}^n\epsilon^kP_k$ such that $[P,P]=O(\epsilon^{n+1})$. The bracket associated to a deformed bivector by the rule $\{F,G\}=[[P,F],G]$ is then
 \begin{equation}\label{defbrack}
 \{,\}^{\sim}=\{,\}^0+\sum_{k=1}^n\epsilon^k\{,\}^k.
 \end{equation}
 In the DN case, where $\deg P_0=1$, the degree of each deformation $\deg P_k$ is $k+1$.
\end{definition}
\begin{definition}\label{defTriv}
 A deformation of $P_0$ is said to be trivial if there exists an element $\phi$ of the Miura group such that $\phi_*P=P_0$. From Definition \ref{defDefo}, this implies that $\phi$ must be of second kind. Equivalently, an infinitesimal deformation is trivial if there exist an evolutionary vector field $X$ such that $[X,P_0]=P$. This is equivalent to say that $\{\phi(u(x)),\phi(u(y))\}_0=\phi\left(\{u(x),u(y)\}^\sim\right)+O(\epsilon^{n+1})$ for a deformed bracket of degree $n+1$.
\end{definition}

As in the finite dimensional setting, the theory of deformations of DN brackets can be rephrased in terms of the \emph{Poisson--Lichnerowicz cohomology}. The main result of this paper is, indeed, the computation of the full cohomology for $N=1$ and arbitrary $D$ DN brackets. It is then well known the relation between lower order cohomology groups and the key elements of the deformation theory.

If $P\in\hcF^2$ and $[P,P]=0$, then $D_P$ defined in \eqref{eq:defDP} squares to 0. That means that it is possible to define a cochain complex
\begin{equation}
 0\rightarrow\hcA^0\xrightarrow{D_P}\hcA^1\xrightarrow{D_P}\hcA^2\xrightarrow{D_P}\cdots
\end{equation}
and its cohomology; moreover, since $D_P$ commutes with all the $\d_{x^i}$ the complex and the cohomology groups pass to the quotient space $\hcF$.

We denote
\begin{equation}
 H^p(\hcA)=\frac{\ker\left(D_P\colon\hcA^p\to\hcA^{p+1}\right)}{\im\left(D_P\colon\hcA^{p-1}\to\hcA^p\right)}
\end{equation}
and
\begin{equation}\label{defcohint}
 H^p(\hcF)=\frac{H^p(\hcA)}{\d_{x^i}\hcA+\cdots+\d_{x^D}\hcA}=\frac{\ker\left(d_P\colon\hcF^p\to\hcF^{p+1}\right)}{\im\left(d_P\colon\hcF^{p-1}\to\hcF^p\right)}
\end{equation}

The groups $H^\bullet(\hcF,d_P)$ constitute the Poisson--Lichnerowicz cohomology in the infinite dimensional setting we are dealing with. We identify the first cohomology group $H^1$  with the symmetries of the Poisson bivector $P$ that are not Hamiltonian, and the second cohomology group $H^2$ with the infinitesimal \emph{compatible deformations} of the Poisson bracket defined by the bivector $P$ that are not \emph{trivial}. Recalling the definition of $d_P$, a symmetry $X$ is an (evolutionary) vector field, namely a derivation of $\hcF$ which commutes with all $\{\partial_{x^i}\}$, such that $[P,X]=0$, a Hamiltonian vector field is a vector field of form $X_H=[P,H]$ for $H\in\hcF^0$ a local functional, a compatible bivector $P'$ is a bivector such that $[P,P']=0$, and a trivial compatible bivector is such that $P'=[P,Y]$ for some vector field $Y\in\hcF^1$.

The gradation on $\hcA$ defined in Paragraph 2.3 can be used to decompose the cohomology groups both on $\hcA$ and $\hcF$. We will denote, for instance, $H^2_2(\hcF)$ the first order ($n=1$ in the expansion \eqref{defbrack}) nontrivial compatible deformations of a Poisson bivector of degree 1.

\begin{remark}
 Spelling out the compatibility condition for the bivectors, one get a sequence of equations
 \begin{align}
  [P_0,P_1]&=0\\
  2[P_0,P_2]+[P_1,P_1]&=0\\
  \vdots&
 \end{align}
In particular, the first nonvanishing term of the expansion must be a compatible bivector in the Bihamiltonian sense, and the possibility of extending the deformation from the first order to the following ones is related to the third cohomology group.

Furthermore, if we have a bivector $P=\sum_{k=0}^n P_k$, $\deg P_k = k+1$, $[P,P]=O(\epsilon^{n+1})$, we can extend it one order higher in $\epsilon$ if and only if the following element of $\hcF^3$ happens to be $d_{P_0}$-exact:
\[
[P_1,P_{n}]+[P_2,P_{n-1}]+\cdots+[P_n,P_1].
\]
Note that the condition $[P,P]=O(\epsilon^{n+1})$ implies that this element is $d_{P_0}$-closed. To this end it would be sufficient to have $H^3_{n+3}(\hcF)=0$, but unfortunately, as we see below, it is in general not the case for the DN-brackets.

However, once we have a particular extension to order $\epsilon^{n+1}$ given by a bivector $P_{n+1}$ such that $[P+P_{n+1},P+P_{n+1}]=O(\epsilon^{k+2})$ (for instance, we definitely have one equal to zero in the case $P_1=\cdots=P_n=0$), then we can describe explicitly the space of all possible extensions up to Miura transformations of second kind. It is an affine space given by $P_{n+1}+H^2_{n+1}(\hat F)$. 
\end{remark}

\section{The Poisson cohomology in the scalar case}
\label{3}
In this Section we consider the $N=1$, or scalar, case. 

\subsection{Main result}
Our main result can be formulated as follows. Let $\Theta$ be the polynomial ring $\R[\{\theta^S, S\in (\Z_{\geq0})^{D-1}\}]$. The derivations $\partial_{x^i}$, $i=1, \dots , D-1$ act on $\Theta$ in the obvious way. 
Denote by $H^p_d(D)$ the homogeneous component of bi-degree $(p,d)$ of 
\begin{equation}
H(D) = \frac{\Theta}{\partial_{x^1} \Theta + \cdots + \partial_{x^{D-1}} \Theta } .
\end{equation}
\begin{theorem} \label{mainthm}
The Poisson cohomology of the Poisson bracket~\eqref{scalpb} in bi-degree $(p,d)$ is isomorphic to the sum of vector spaces
\begin{equation}
H^p_d(D) \oplus H^{p+1}_d (D) .
\end{equation}
\end{theorem}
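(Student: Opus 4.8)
The plan is to reduce \eqref{scalpb} to a constant-coefficient normal form, compute the cohomology on $\hcA$ by a Koszul-type contraction, and then descend to $\hcF$ through the partial integration sequences of Lemma~\ref{lemma:kernel-di-zero}. Since $g$ is non-vanishing on the ball $U$, the change of dependent variable $v=\phi(u)$ with $(\phi')^2 g=\mathrm{const}$ belongs to the Miura group \eqref{eq:Miuradef} and turns $g$ into a constant, annihilating the term $\tfrac12 g'\,\partial_{x^i}u\,\delta$; a linear change of the independent variables $x$ then aligns the constants $(c^i)$ with $\xi_1$. As the Poisson cohomology is invariant under both operations (they act as isomorphisms of the complex $(\hcA,D_P)$), I may assume $P=\tfrac12\int^D \theta\,\theta^{\xi_1}\,d^Dx$. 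A direct computation from \eqref{eq:defDP} gives $\delta P/\delta u=0$ and $\delta P/\delta\theta=\theta^{\xi_1}$, so
\[
D_P=\sum_S \theta^{S+\xi_1}\frac{\partial}{\partial u^{S}},\qquad D_P u^{S}=\theta^{S+\xi_1},\quad D_P\theta^{S}=0,
\]
which is a derivation squaring to zero.

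Next I would read $D_P$ as a Koszul differential pairing the even generator $u^{S}$ with the odd generator $\theta^{S+\xi_1}$. The generators $\theta^{S}$ with $s_1=0$ are neither hit nor differentiated, so they survive. The pairs with $|S|>0$ are contracted by the algebraic homotopy $h=\sum_{|S|>0}u^{S}\,\partial/\partial\theta^{S+\xi_1}$: one checks that $\{D_P,h\}$ is the number operator counting the factors $u^S,\theta^{S+\xi_1}$ with $|S|>0$, which is invertible off its kernel. The single analytic pair $(u^{0},\theta^{\xi_1})$, carrying the coefficient ring $C^\infty(U)$, is handled separately, since the two-term complex $C^\infty(U)\otimes\Lambda[\theta^{\xi_1}]$, $f+g\theta^{\xi_1}\mapsto f'\theta^{\xi_1}$, has cohomology $\R$ by the Poincar\'e lemma on the ball $U$. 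Working in each finite-dimensional bidegree $(p,d)$, this yields $H^\bullet(\hcA,D_P)\cong \Theta=\R[\theta^{S}:s_1=0]$, with $\theta^S$ of bidegree $(1,|S|)$. Under this isomorphism $\partial_{x^1}$ acts as $0$, because $\partial_{x^1}\theta^{S}=\theta^{S+\xi_1}=D_P u^{S}$ is exact, whereas for $i\geq 2$ the map $\partial_{x^i}\colon\theta^{S}\mapsto\theta^{S+\xi_i}$ preserves $\{s_1=0\}$ and is the standard derivation on $\Theta$; after relabelling $\{x^2,\dots,x^D\}$ as the $D-1$ acting directions, $\Theta/(\partial_{x^2}\Theta+\cdots+\partial_{x^D}\Theta)$ is exactly $H(D)$.

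To descend to $\hcF$, write $\hcF_i=\hcA/(\partial_{x^1}\hcA+\cdots+\partial_{x^i}\hcA)$ and use that each row of Lemma~\ref{lemma:kernel-di-zero} is a short exact sequence of $D_P$-complexes, with connecting map preserving the standard degree $d$. For the distinguished direction $x^1$, where $\partial_{x^1}=0$ on cohomology, the long exact sequence collapses to $0\to H^p_d(\hcA)\to H^p_d(\hcF_1)\to H^{p+1}_d(\hcA/\R)\to 0$, producing the degree-preserving doubling $H^p_d(\hcF_1)\cong\Theta^p_d\oplus\Theta^{p+1}_d$. For a generic direction $x^a$ ($a\geq 2$) the long exact sequence gives
\[
0\to \operatorname{coker}\!\big(\partial_{x^a}\,|\,H^p(\hcF_{a-1})\big)\to H^p(\hcF_a)\to \ker\!\big(\partial_{x^a}\,|\,H^{p+1}(\hcF_{a-1})\big)\to 0,
\]
so that, provided the kernels vanish in positive super-degree, each generic step merely replaces the cohomology by the cokernel of $\partial_{x^a}$. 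Granting this, writing $\Theta^{(a)}=\Theta/(\partial_{x^2}\Theta+\cdots+\partial_{x^a}\Theta)$, induction gives $H^p(\hcF_a)\cong(\Theta^{(a)})^p\oplus(\Theta^{(a)})^{p+1}$; at $a=D$ this is precisely $H^p_d(D)\oplus H^{p+1}_d(D)$, the $\R$-corrections $\hcA/\R$, $\hcF_{i-1}/\R$ affecting only bidegree $(0,0)$ and being routine to track.

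The main obstacle is exactly this vanishing of $\ker(\partial_{x^a}\,|\,\Theta^{(a-1)})$ in positive super-degree, i.e.\ the injectivity of multiplication by $\partial_{x^a}$ modulo $(\partial_{x^2},\dots,\partial_{x^{a-1}})$, which is the statement that $\partial_{x^2},\dots,\partial_{x^D}$ act as a regular sequence on $\Theta$. I would prove it through an antisymmetric-polynomial model: identifying the super-degree-$k$ part $\Theta^k$ with the antisymmetric polynomials in $k$ groups of variables $z_{(m)}=(z_{2,(m)},\dots,z_{D,(m)})$, the derivation $\partial_{x^a}$ becomes multiplication by the symmetric element $e_a=\sum_{m=1}^{k}z_{a,(m)}$. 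The decisive simplification is that $e_2,\dots,e_D$ are power sums in \emph{disjoint} variable groups, hence a regular sequence on the full polynomial ring; passing to the antisymmetric part (free of rank one over the Cohen--Macaulay ring of multisymmetric functions, via the Vandermonde) the regularity survives by a dimension count. This commutative-algebra input kills the kernel contributions, and the remaining homological bookkeeping is formal.
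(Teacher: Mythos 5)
Your overall architecture is the same as the paper's: normalize \eqref{scalpb} to a constant-coefficient bracket, compute $H(\hcA)\cong\Theta$ (your Koszul contraction with the number operator is the same computation as the paper's identification of the differential with a de Rham differential in the $u^S$), then climb the tower of short exact sequences of Lemma~\ref{lemma:kernel-di-zero}, using that the distinguished derivation acts by zero on cohomology to produce the doubling $H^p_d\oplus H^{p+1}_d$. Two genuine differences. First, ordering: the paper integrates out the distinguished direction \emph{last}, so every intermediate cohomology is a single quotient of $\Theta$ and the doubling appears only at the final step; you integrate it out \emph{first}, so from $\hcF_1$ onward you must carry the two-term extension $0\to\Theta^p\to H^p(\hcF_1)\to\Theta^{p+1}\to 0$ through all remaining steps. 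This is harmless (the snake lemma propagates ``kernel zero on sub and quotient'' to the extension and to its quotients), but it is bookkeeping you gloss over and that the paper's ordering avoids. Second, and more substantively, the key injectivity statement --- that $\partial_{x^a}$ has zero kernel on $\Theta$ modulo the previously used derivations and the constants --- is proved in the paper by reducing it to the acyclicity of an auxiliary horizontal complex and invoking Anderson's explicit homotopy contraction~\eqref{eq:HomotopyContraction}; you replace this by a regular-sequence argument in a polynomial model. Your model is correct ($\Theta^k$ is the sign-isotypic part of $\R[z_{i,(m)}]$ under the diagonal $S_k$-action, and $\partial_{x^a}$ becomes multiplication by $e_a=\sum_m z_{a,(m)}$), and this commutative-algebra route is arguably more self-contained than the appeal to the variational bicomplex.

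There is, however, a genuine error in the justification of that key step. For $D\geq 3$ the antisymmetric part of $\R[z_{i,(m)}]$ under the diagonal $S_k$-action on $D-1\geq 2$ groups of variables is \emph{not} free of rank one over the multisymmetric functions: already for $k=2$ and two groups $(x_1,x_2),(y_1,y_2)$, both $x_1-x_2$ and $y_1-y_2$ are antisymmetric and neither is an invariant multiple of the other, so the module requires at least two generators and there is no single ``Vandermonde''. Hence the step ``free of rank one via the Vandermonde, regularity survives by a dimension count'' fails as stated (it is correct only for $D=2$). The conclusion you need is nevertheless true and can be repaired cheaply: the antisymmetrization projector is $\R[e_2,\dots,e_D]$-linear because each $e_a$ is symmetric, so $\Theta^k$ is a direct summand of the full polynomial ring as an $\R[e_2,\dots,e_D]$-module; the $e_a$ are linearly independent linear forms in pairwise disjoint variables, hence a regular sequence on the full ring; and a regular sequence on a module restricts to a regular sequence on any direct summand. (Equivalently, by Hochster--Eagon the isotypic component is a maximal Cohen--Macaulay module over the invariant ring and $e_2,\dots,e_D$ extend to a homogeneous system of parameters.) With that substitution your proof goes through.
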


The proof of this Theorem will be given in the following subsections. The strategy is to compute first the cohomology of a particular Poisson bracket, and then to show that the cohomology does not change under linear changes of the independent variables. That allows us to extend the result to the whole class of Poisson brackets~\eqref{scalpb}.

Let us first derive some consequences of Theorem~\ref{mainthm}. Let's start by an explicit description of the spaces $H^p_d(D)$ for small or large $p$:
\begin{lemma}\label{lemmaH(D)}
We have that
\begin{equation}
H^p_d(D) = \begin{cases}
\R & d=0, \ p=0,1,\\
0 & d\geq1,\ p=0,1,\\
0 & d=2, \ p=2, \\
\R^{\binom{D-1}{d}} &d\geq0, \ p=d+1, \\
0	& d\geq0, \ p\geq d+2.
\end{cases}
\end{equation}
\end{lemma}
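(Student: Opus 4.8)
The plan is to work directly with the bigraded exterior algebra $\Theta = \R[\{\theta^S : S \in (\Z_{\geq0})^{D-1}\}]$, on which $\deg_\theta\theta^S = 1$ gives the super-degree $p$ and $\deg\theta^S = |S|$ gives the standard degree $d$, and on which each $\partial_{x^i}$ is the derivation extending $\theta^S \mapsto \theta^{S+\xi_i}$. Since every $\partial_{x^i}$ preserves $p$ and raises $d$ by one, $H^p_d(D)$ is the cokernel of the map $\bigoplus_{i=1}^{D-1}\partial_{x^i}\colon \bigoplus_{i=1}^{D-1}\Theta^p_{d-1}\to\Theta^p_d$, $(f_i)\mapsto\sum_i\partial_{x^i}f_i$. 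The first thing I would record is a support estimate: a nonzero monomial of super-degree $p$ is a product of $p$ \emph{distinct} generators $\theta^{S_1}\wedge\cdots\wedge\theta^{S_p}$, and since there is a single generator of standard degree $0$ (namely $\theta^0$) while all others have degree $\geq 1$, the total degree obeys $d=\sum_\ell|S_\ell|\geq p-1$. Hence $\Theta^p_d=0$ whenever $p\geq d+2$, which disposes of the last case at once.

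The diagonal case $p=d+1$ then needs no further input: the same estimate gives $\Theta^{d+1}_{d-1}=0$, so nothing maps into $\Theta^{d+1}_d$ and $H^{d+1}_d(D)=\Theta^{d+1}_d$. I would then identify this space explicitly, since $p=d+1$ distinct generators of total degree $d$ forces the use of $\theta^0$ together with exactly $d$ generators of standard degree $1$; thus a basis is $\{\theta^0\wedge\theta^{\xi_{i_1}}\wedge\cdots\wedge\theta^{\xi_{i_d}} : 1\leq i_1<\cdots<i_d\leq D-1\}$, of cardinality $\binom{D-1}{d}$, giving $H^{d+1}_d(D)\cong\R^{\binom{D-1}{d}}$. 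In particular this also yields the entries $H^0_0=\R$ and $H^1_0=\R$ at $d=0$.

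For the remaining low-$p$ entries I would argue by hand. For $p=0$ the subspace $\Theta^0=\R$ sits in degree $0$ with zero differential, giving $H^0_0=\R$ and $H^0_d=0$ for $d\geq1$. For $p=1$ and $d\geq1$, the space $\Theta^1_d=\Span\{\theta^S:|S|=d\}$ lies entirely in the image, since any such $S$ has some $s_k\geq1$ and then $\theta^S=\partial_{x^k}\theta^{S-\xi_k}$; the map is surjective and $H^1_d(D)=0$. The only genuinely computational entry is $(p,d)=(2,2)$. Writing $a_{kj}:=\theta^{\xi_k}\wedge\theta^{\xi_j}$ and $b_{kj}:=\theta^0\wedge\theta^{\xi_k+\xi_j}$, the space $\Theta^2_2$ has basis $\{a_{kj}:k<j\}\cup\{b_{kj}:k\leq j\}$, while $\Theta^2_1$ is spanned by the $\theta^0\wedge\theta^{\xi_j}$, and $\partial_{x^k}(\theta^0\wedge\theta^{\xi_j})=a_{kj}+b_{kj}=:v_{kj}$. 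Using $a_{kj}=-a_{jk}$ and $b_{kj}=b_{jk}$, the combinations $\tfrac12(v_{kj}+v_{jk})=b_{kj}$ and $\tfrac12(v_{kj}-v_{jk})=a_{kj}$ exhibit the whole basis in the image, whence $H^2_2(D)=0$.

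I do not expect any serious obstacle: the entire lemma lives on the boundary of the bidegree support triangle $p\leq d+1$, where the differential is either identically zero (so $H^p_d=\Theta^p_d$) or manifestly surjective, and the one place demanding an actual — but short — linear-algebra check is $(2,2)$. The genuinely hard region is the interior $2\leq p\leq d$, which the lemma deliberately omits; this is precisely the part of $H(D)$ that admits no simple closed form and is therefore left implicit in the statement of Theorem~\ref{mainthm}.
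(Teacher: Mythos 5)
Your proof is correct and follows essentially the same route as the paper: the support estimate $\Theta^p_d=0$ for $p\geq d+2$, the explicit basis $\theta^0\theta^{\xi_{i_1}}\cdots\theta^{\xi_{i_d}}$ on the diagonal $p=d+1$, and the symmetrization/antisymmetrization check at $(p,d)=(2,2)$ are exactly the paper's arguments, with your treatment of $p=0,1$ merely spelling out what the paper dismisses as trivial. The only cosmetic slip is the claim that the diagonal case "also yields" $H^0_0=\R$ (it only yields $H^1_0$), but you cover $H^0_0$ correctly in the separate $p=0$ paragraph, so nothing is missing.
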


\begin{remark}
The vanishing result in last line of the Lemma can be improved:  $H^p_d(D)$ is zero for any $(p,d)$ in the range
\begin{equation} \label{vanibi}
\binom{D+l-1}{l}< p \leq \binom{D+l}{l+1}, \quad
0 \leq d <p(l+1) - \binom{D+l}{l}
\end{equation}
for any $l\geq0$.
\end{remark}

\begin{proof}
For $p=0,1$ the statement is trivial. 

For $p\geq d+2$, it is easy to see that $\Theta_d^p = 0$, therefore $H^p_d(D) =0$ as well. By computing the minimal $\deg$ of a monomial of $\deg_\theta = p$, one can also extend this vanishing result for $\Theta^p_d$ to the range given in~\eqref{vanibi}.

For $p=d+1$ a basis of $\Theta_d^{d+1}$ is given by
\begin{equation}
\theta^0 \theta^{\xi_{i_1}} \cdots \theta^{\xi_{i_d}}
\end{equation}
with $i_1 < \cdots < i_d$ and $i_l = 1, \dots , D-1$. It follows that
\begin{equation}
\dim H^{d+1}_d (D) =\dim \Theta^{d+1}_d = \binom{D-1}{d}.
\end{equation}

For $p=d=2$ a generic element in $\Theta^2_2$ 
\begin{equation}
\sum_{i,j=1}^{D-1} \left( a_{ij} \theta^{\xi_i} \theta^{\xi_j} + 
b_{ij} \theta \theta^{\xi_i +\xi_j} \right)
\end{equation}
can always be cancelled by a generic element in $\sum_{i=1}^{D-1} \partial_{x^i} \Theta^2_1$, which is of the form 
\begin{equation}
\sum_{i=1}^{D-1} \partial_{x^i} \left( \sum_{j=1}^{D-1} c_{ij} \theta \theta^{\xi_j}\right) =
\sum_{i,j=1}^{D-1} c_{ij} \left( \theta^{\xi_i} \theta^{\xi_j} + 
 \theta \theta^{\xi_i +\xi_j} \right),
\end{equation}
by the obvious symmetry properties of the matrices $a_{ij}$, $b_{ij}$, $c_{ij}$.
\end{proof}

\begin{remark}
For $D\geq2$ the dimension of $\Theta^p_d$ is given by
\begin{equation}
\dim \Theta_d^p = \sum_{\substack{p_0, p_1, \dots  \geq 0 \\ p_0 + p_1 + \cdots = p \\ p_1 + 2 p_2 + 3p_3 + \cdots = d}}  
\prod_{s\geq0} \binom{\binom{s+D-2}{D-2}}{p_s} 
\end{equation}
which can be given in terms of a generating function as
\begin{equation}
\sum_{p,d\geq 0} \dim \Theta_d^p \ x^p y^d = \prod_{s\geq 0} (1+ x y^s)^{\binom{s+D-2}{D-2}} .
\end{equation}
\end{remark}

A straightforward application of Theorem~\ref{mainthm} gives us the explicit form of the Poisson cohomology groups in several cases:
\begin{corollary} \label{explgr}
We have that:
%
\begin{equation}
H^p_d(\hcF) \cong \begin{cases}
\R^2 & d=0, \ p=0, \\
\R & d=0, \ p=1, \\
0 & d\geq1, \ p=0, \\
\R^{D-1} & d=1, \ p=1, \\
0 & d=1, \ p=2, \\
\R^{\frac12 (D-1)(D-2)} &d=2, \ p=2, \\
\R^{\binom{D-1}{d}} & d\geq0 , \ p=d+1, \\
0 &d\geq0, \ p\geq d+2 .
\end{cases}
\end{equation}
\end{corollary}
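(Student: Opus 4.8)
The plan is to deduce Corollary~\ref{explgr} directly from Theorem~\ref{mainthm} together with the explicit values gathered in Lemma~\ref{lemmaH(D)}. Theorem~\ref{mainthm} identifies the Poisson cohomology in bidegree $(p,d)$ with $H^p_d(\hcF)\cong H^p_d(D)\oplus H^{p+1}_d(D)$, so no new cohomological input is required: the whole statement reduces to evaluating, for each listed pair $(p,d)$, the two summands $H^p_d(D)$ and $H^{p+1}_d(D)$ from the piecewise formula of Lemma~\ref{lemmaH(D)} and adding their dimensions. First I would organise the pairs $(p,d)$ according to which row of that piecewise formula each of the two summands lands in, keeping in mind that the shift $p\mapsto p+1$ in the second summand generally moves it into a different row than the first.

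Concretely, I would run through the cases as follows. For $d=0$ the two summands are controlled by the $p=0,1$ rows: at $p=0$ one gets $H^0_0(D)\oplus H^1_0(D)=\R\oplus\R\cong\R^2$; at $p=1$ one gets $\R\oplus 0\cong\R$; and for $p\geq 2$ both summands vanish. For $p=0$ and $d\geq 1$ both $H^0_d(D)$ and $H^1_d(D)$ vanish, giving $0$. The nonvanishing positive-degree entries all come from the diagonal row $p=d+1$, where $H^{d+1}_d(D)\cong\R^{\binom{D-1}{d}}$ is the top nonzero component of $H(D)$: since the shifted summand $H^{d+2}_d(D)$ already lies in the range $p\geq d+2$ and vanishes, one obtains $H^{d+1}_d(\hcF)\cong\R^{\binom{D-1}{d}}$. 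The remaining small corners are read off the same way; e.g.\ for $(p,d)=(2,2)$ one combines $H^2_2(D)=0$ with the diagonal term $H^3_2(D)\cong\R^{\binom{D-1}{2}}=\R^{\frac{1}{2}(D-1)(D-2)}$. Finally, whenever $p\geq d+2$ both summands sit in the vanishing range, so $H^p_d(\hcF)=0$.

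Since both inputs are already established, there is no genuine analytic or algebraic obstacle here; the only real work is the bookkeeping, and the point that requires care is precisely the degree shift in the second summand. For a fixed target bidegree $(p,d)$ the two pieces $H^p_d(D)$ and $H^{p+1}_d(D)$ almost always come from different branches of Lemma~\ref{lemmaH(D)}, so I would tabulate $H^\bullet_d(D)$ for small super-degree and along the diagonal before summing, to be sure each entry is drawn from the correct branch. A useful structural check built into the tabulation is that, in positive standard degree, every nonvanishing class of $H(D)$ sits on the diagonal $p=d+1$ and is counted by $\binom{D-1}{d}$; through the shift such a class contributes both as the unshifted summand in bidegree $(d+1,d)$ and as the shifted summand in bidegree $(d,d)$, hence to two consecutive super-degrees of the Poisson cohomology. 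Tracking these overlaps correctly along the boundary rows $p=d+1$ and $p=d+2$ is where the bookkeeping is most delicate, and it is the step I would double-check against the low-degree entries of the corollary.
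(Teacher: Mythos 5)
Your proof is correct and is exactly the paper's (implicit) argument: the corollary is obtained by substituting the values from Lemma~\ref{lemmaH(D)} into the decomposition $H^p_d(\hcF)\cong H^p_d(D)\oplus H^{p+1}_d(D)$ of Theorem~\ref{mainthm}, and your case-by-case evaluation of the two summands, with the degree shift in the second one, is accurate. One remark worth making: your (correct) bookkeeping gives $H^2_1(\hcF)\cong H^2_1(D)\oplus H^3_1(D)\cong\R^{D-1}$, which agrees with the line $p=d+1$ of the corollary but contradicts the line asserting $0$ for $d=1$, $p=2$; that line is evidently a misprint for $d=2$, $p=1$ (i.e.\ $H^1_2(\hcF)=0$, which follows from $H^1_2(D)=H^2_2(D)=0$ and is confirmed by the direct PVA computation in Section~4), so your method actually exposes a typo in the statement rather than containing a gap.
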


\begin{remark}
The vanishing result in the last line actually generalises to the set of bidegrees~\eqref{vanibi}.
\end{remark}

\begin{remark}
In the $D=1$ case we have $H(1) = \Theta = \R \oplus \R \theta^0$, therefore Theorem~\ref{mainthm} implies that $H^p_d(\hcF)$ is isomorphic to $\R^2$ for $p=d=0$, to $\R$ for $p=1$, $d=0$, and vanishes otherwise, as in the scalar case of Getzler's result~\cite{g02}. 
\end{remark}

\begin{remark}
For $D=2$ we can provide a general formula for $H^p_d(\hcF)$. Indeed, in this case $\Theta=\R[\theta^{(i,0)}]$ and in particular $\Theta^p_d$ is generated by the monomials $\theta^{(i_1,0)}\theta^{(i_2,0)}\cdots\theta^{(i_p,0)}$ such that $i_1+i_2+\cdots+i_p=d$, $i_k\geq 0$. The dimension of $\Theta^p_d$ is given by the number of ways of writing $d$ as the sum of $p$ distinct nonnegative integers, regardless of the order. The result, in terms of the partition function $P(n,k)$ giving the number of ways of writing $n$ as sum of $k$ positive addends, is \cite{c74}
\begin{equation}\label{dimtheta}
 \dim \Theta^p_d=P\left(d+p-\binom{p}{2},p\right).
\end{equation}
Since $\ker\partial_{x^1}=\R\neq 0$ only in $\Theta^0_0$, the dimension of $\partial_{x^1}\Theta^p_{d-1}$ for $(p,d)\neq(0,1)$ is given by the same formula \eqref{dimtheta} with $d'=d-1$. 
%
%
 
Then we have:
\begin{equation}
\dim H^p_d(2) = P(d+\frac{p(3-p)}2 , p) -  P(d-1+\frac{p(3-p)}2 , p) 
\end{equation}
for $(p,d)\not=(0,1)$ and $\dim H^0_1(2) = 0$.
The dimension of the cohomology group $H^p_d(\hcF)$ is obtained in a straightforward way from Theorem \ref{mainthm} for $D=2$.
We can conveniently express it as a generating function
\begin{equation} 
\sum_{d\geq0} \dim H^p_d(\hcF) \ x^d = h^p(x) + h^{p+1}(x) ,
\end{equation}
where
\begin{equation}
\sum_{d\geq0} \dim H^p_d(D=2) \ x^d = h^p(x) := x^{\frac{p}2 (p-1)} \prod_{i=2}^p (1-x^i)^{-1}  +x \delta_{p,0}.
\end{equation}
From the explicit form of $P(n,k)$ for $k=2,3$ we can improve the  results of Lemma \ref{lemmaH(D)}
\begin{equation}
\dim H^2_d(D=2)=\begin{cases}
             0 & d=2k,\\
	     1 & d=2k+1,
            \end{cases}
\end{equation}
\begin{equation}
 \dim H^3_d(D=2)=\begin{cases}
                  0 & d<3 ,\\
		k & d=4+6k ,\\
		k+1 & d=3+6k, \ 5+6k\leq d \leq 8+6k.
\end{cases}
 \end{equation}

Combining the two previous results we obtain:
\begin{equation} \label{HD2}
\dim H^2_d(\hcF) = \begin{cases}
k & d=4+6k, \\
k+1 & d=6+6k, \ d=8+6k , \\
k+2 & d=3+6k, \ d=5+6k, \ d=7+6k .
\end{cases}
\end{equation}
In particular it should be noticed that $H^2_d(\hcF)\neq 0$ for all $d>4$.
Some explicit values are
\begin{center}
\begin{tabular}{|r | *{9}{c}|}
\hline
$d$ & 0 & 1 & 2 & 3 & 4 & 5 & 6 & 7 & 8\\
\hline
$\dim H^2_d(\hcF)$ & 0 & 1 & 0 & 2 & 0 & 2 & 1 & 2 & 1\\
\hline
$\dim H^3_d(\hcF)$ & 0 & 0 & 0 & 1 & 0 & 1 & 2 & 1 & 2\\
\hline
\end{tabular}
\end{center}
\end{remark}

\begin{remark}
For $D>2$ we have no explicit formulas, but we expect $H^p_d(D)$ to be generically non-trivial. Symbolic computer calculations allow us to obtain, for example, for $D=3$:
\begin{center}
\begin{tabular}{|r | *{9}{c}|}
\hline
$d$ & 0 & 1 & 2 & 3 & 4 & 5 & 6 & 7 & 8\\
\hline
$\dim H^2_d(\hcF)$ & 0 & 2 & 1 & 8 & 3 & 16 & 13 & 26 & 26\\
\hline
$\dim H^3_d(\hcF)$ & 0 & 0 & 1 & 4 & 6 & 14 & 29 & 36 & 72\\
\hline
\end{tabular}
\end{center}
and for $D=4$:
\begin{center}
\begin{tabular}{|r | *{7}{c}|}
\hline
$d$ & 0 & 1 & 2 & 3 & 4 & 5 & 6 \\
\hline
$\dim H^2_d(\hcF)$ & 0 & 3 & 3 & 20 & 15 & 66 & 73 \\
\hline
$\dim H^3_d(\hcF)$ & 0 & 0 & 3 & 11 & 30 & 75 & 183 \\
\hline
\end{tabular}
\end{center}

\end{remark}

\subsection{Cohomology in a special case}\label{cohosimple}

Let us consider the DN brackets with one dependent variable and $D$ independent variables
\begin{equation}
\{ u(x) , u(y) \}^{\hat{}} = \partial_{x^D} \delta(x-y),
\end{equation}
which, in the $\theta$ formalism, corresponds to the bivector
\begin{equation}
\hat{P} = \frac12 \int^D \theta \theta^{\xi_D} \ d^D x \in \hcF^2. 
\end{equation}
In this Section we compute the Poisson cohomology of the bracket $\{,\}^{\hat{}}$, i.e., the cohomology of the complex $(\hcF, d_{\hat{P}})$, where $d_{\hat{P}}= [\hat{P},\cdot]$.

Let
\begin{equation} \label{deltaop}
\Delta := D_{\hat{P}} = \sum_S \theta^{S+\xi_D} \frac{\partial }{\partial u^S} ,
\end{equation}
be the differential operator on $\hcA$ associated with $\hat{P}$ defined in~\eqref{eq:defDP}. 
Clearly, $\Delta$ squares to zero. 
Moreover, this operator commutes with $\d_{x^1},\dots,\d_{x^D}$, therefore it induces a differential on each of the spaces $\hcF_1,\dots,\hcF_D$. In particular, it is obvious that on $\hcF_D = \hcF$ this operator coincides with $d_{\hat{P}}$.

With the differentials induced by $\Delta$ on $\hcA,\hcF_1,\dots,\hcF_D$, the short exact sequences~\eqref{eq:Seq} become short exact sequences of complexes. We will use the associated long exact sequences in order to compute the cohomology of $\hcF_1,\dots,\hcF_D$.
Since all the cohomology groups that we consider are to be understood with respect to the differential induced by $\Delta$, we will generally refrain from indicating it all the time.

Recall that we denote by $Z_D$ the sub-semiring of $Z$ that consists of multi-indices of the form $S=\sum_i s_i \xi_i$ with $s_D =0$. 
\begin{lemma} \label{lemmaH}
We have that $H(\hcA) = \R[\{ \theta^S, S\in Z_D \} ]$.
\end{lemma}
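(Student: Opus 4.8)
The claim is that the cohomology of the complex $(\hcA, \Delta)$, where $\Delta = \sum_S \theta^{S+\xi_D} \partial/\partial u^S$, is exactly the polynomial ring $\R[\{\theta^S, S\in Z_D\}]$. The plan is to construct an explicit homotopy contraction for $\Delta$ onto this subalgebra. The key observation is that $\Delta$ acts very simply: it raises a derivative index by $\xi_D$ while converting a $u$-variable into a $\theta$-variable. So the natural idea is to pair up the variables $u^S$ and $\theta^{S+\xi_D}$: the differential ``kills'' $u^S$ by sending it to $\theta^{S+\xi_D}$, and the cohomology should be generated by the unpaired survivors, which are precisely the $\theta^S$ with $s_D = 0$, i.e. $S \in Z_D$ (since for these there is no $u$-variable mapping onto them under $\Delta$).

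**Reducing to a Koszul-type computation.**
First I would make the pairing precise. For each multi-index $T$ with $t_D \geq 1$, write $T = S + \xi_D$ and observe $\Delta(u^S) = \theta^{T}$. The remaining $\theta$-variables, those with $t_D = 0$, form a set indexed by $Z_D$ and are $\Delta$-closed with no preimage. So I would organize $\hcA$ as a tensor product (over the subalgebra $C^\infty(U)[[\{\theta^S : S\in Z_D\}]]$) of ``two-variable'' pieces, one for each index $S$, each involving the commuting variable $u^S$ and the anticommuting variable $\theta^{S+\xi_D}$. On each such piece $\Delta$ acts as the de Rham–type differential $u^S \mapsto \theta^{S+\xi_D}$, whose cohomology is one-dimensional (only the constants survive, since $\Delta(f(u^S)) = f'(u^S)\theta^{S+\xi_D}$ has the primitive/contraction structure of an acyclic complex in positive filtration). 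The cohomology of the whole algebra is then the tensor product of these, collapsing to $\R[\{\theta^S : S\in Z_D\}]$.

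**Building the explicit homotopy.**
To make this rigorous I would introduce the counting (``number'') operator
\begin{equation}
\mathcal{N} = \sum_S \left( \theta^{S+\xi_D} \frac{\partial}{\partial \theta^{S+\xi_D}} + u^S \frac{\partial}{\partial u^S} \right),
\end{equation}
which counts the total number of $u^S$ and $\theta^{S+\xi_D}$ factors (for all $S$), and the homotopy operator
\begin{equation}
h = \sum_S u^S \frac{\partial}{\partial \theta^{S+\xi_D}}.
\end{equation}
A direct computation should give $\Delta h + h \Delta = \mathcal{N}$ on $\hcA$, in the style of the standard Koszul homotopy. Since $\mathcal{N}$ is diagonalizable with the subalgebra $\R[\{\theta^S : S\in Z_D\}]$ as its kernel (eigenvalue $0$) and is invertible on the complement, dividing by the eigenvalue on each positive eigenspace produces a genuine contracting homotopy there. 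This shows that every $\Delta$-cocycle with a positive $\mathcal{N}$-component is exact, so $H(\hcA)$ reduces to $\ker \mathcal{N} = \R[\{\theta^S : S\in Z_D\}]$, and these survivors are manifestly closed and represent nonzero classes.

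**Main obstacle.**
The delicate point is bookkeeping rather than conceptual: verifying the identity $\Delta h + h\Delta = \mathcal{N}$ carefully, respecting the signs coming from the anticommuting $\theta$'s, and confirming that the infinite formal power series makes sense term by term — each monomial involves only finitely many variables, so the operators act with no convergence issues, and the degree gradation ensures $\mathcal{N}$ is invertible on each fixed homogeneous component outside the kernel. I would also need to check that the $C^\infty(U)$ coefficients (the dependence on $u = u^0$) ride along harmlessly: since $\Delta$ treats $u^0$ like any $u^S$ with $S=0$, the variable $u = u^0$ is itself paired with $\theta^{\xi_D}$ and hence contributes nothing to the cohomology, consistent with the survivors being exactly the $\theta^S$, $S \in Z_D$.
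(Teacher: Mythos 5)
Your overall strategy --- pairing $u^S$ with $\theta^{S+\xi_D}$ and contracting onto the unpaired generators $\theta^S$, $S\in Z_D$ --- is exactly the paper's: there one identifies $\theta^{S+\xi_D}$ with $du^S$, so that $\Delta$ becomes the de Rham differential in the variables $u^S$, and one concludes by the Poincar\'e lemma that only the ``constants'', i.e.\ the elements of $\R[\{\theta^S, S\in Z_D\}]$, survive. So conceptually you are reproducing the paper's argument, just trying to make the contracting homotopy explicit.

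The one genuine problem is with that explicit homotopy. The identity $\Delta h + h\Delta = \mathcal{N}$ followed by division by the eigenvalue of $\mathcal{N}$ works for the variables $u^S$ with $|S|>0$, on which any fixed homogeneous component of $\hcA$ depends polynomially, but it fails for the $S=0$ factor: the coefficients are arbitrary smooth functions of $u=u^0$, and the Euler operator $u\,\partial/\partial u$ is not diagonalizable on $C^\infty(U)$ (for instance $e^{u}$ is not a combination of eigenvectors), so $\ker\mathcal{N}$ is not the asserted polynomial ring and the ``divide by the eigenvalue'' step is unavailable there. Your closing remark that $u^0$ ``rides along harmlessly'' by the same mechanism is therefore not justified as stated. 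The fix is standard: on the factor $C^\infty(U)\otimes\Lambda[\theta^{\xi_D}]$ replace the algebraic Koszul contraction by the Poincar\'e-lemma homotopy $f(u)\,\theta^{\xi_D}\mapsto \int_{u_0}^{u}f(s)\,ds$, which uses that $U$ is an open ball (contractible); combined with your Koszul homotopy on the remaining formal variables this yields the claimed contraction onto $\R[\{\theta^S, S\in Z_D\}]$, in agreement with the paper's de Rham argument.
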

\begin{proof}
Under the identification $\theta^{S+\xi_D}\leftrightarrow du^S$, $S\in Z$, the operator $\Delta$ turns into the de Rham differential on the differential forms in $u^S$, $S\in Z$, hence its cohomology is given by the degree $0$ forms constant in $u^S$, $S\in Z$, that is, by all elements of $\hcA$ independent of $u^S$ and $\theta^{S+\xi_D}$, $S\in Z$. These are the 
polynomials in $\theta^S$, $S\in Z_D$.
\end{proof}

For $D=1$ we have $H(\hcA)= \R \oplus \R \theta$, as in~\cite{lz11, lz13}.

As before, we denote the polynomial ring $\R[\{\theta^S, S\in Z_D\}]$ by $\Theta$. It is important to stress that the polynomials in $\theta^S$, $S\in Z_D$, are indeed representatives of cohomology classes in $H(\hcA)$.

\begin{proposition}
We have:
\begin{equation}
H(\hcF_i) = \frac{\Theta}{\partial_{x^1} \Theta + \cdots + \partial_{x^i} \Theta} 
\end{equation}
for $i=1, \dots , D-1$.
\end{proposition}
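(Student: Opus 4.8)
The plan is to prove the Proposition by induction on $i$, feeding the exact sequences~\eqref{eq:Seq} — which, since $\Delta$ commutes with every $\partial_{x^j}$, are short exact sequences of complexes for the differential induced by $\Delta$ — into the associated long exact sequences in cohomology, starting from the computation $H(\hcA) = \Theta$ of Lemma~\ref{lemmaH}. Throughout I keep track of both gradations: since $\Delta$ raises $\deg_\theta$ by one and $\deg$ by one, the pair $(p,d)$ is carried consistently, the map $\int dx^i$ preserves $(p,d)$, while $\partial_{x^i}$ preserves $p$ and raises $d$ by one; consequently the connecting homomorphism preserves $d$ and raises $p$ by one.

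For the base case $i=1$ I apply the long exact sequence of $0 \to \hcA/\R \xrightarrow{\partial_{x^1}} \hcA \xrightarrow{\int dx^1} \hcF_1 \to 0$. Since $\R$ sits in bidegree $(0,0)$ as a subcomplex with trivial differential and $\R \hookrightarrow H^0_0(\hcA) = \Theta^0_0 = \R$ is an isomorphism, one has $H(\hcA/\R) = \Theta/\R$ (the constants are removed from bidegree $(0,0)$, nothing else changes). The induced map $\partial_{x^1} \colon H(\hcA/\R) \to H(\hcA)$ is the honest combinatorial operator $\theta^S \mapsto \theta^{S+\xi_1}$ on the representatives $\Theta$, which again land in $\Theta$ because $S+\xi_1 \in Z_D$; since the kernel of $\partial_{x^1}$ on $\hcA$ is $\R$, its restriction to $\Theta$ has kernel $\R$, so $\partial_{x^1}$ is injective on $\Theta/\R$. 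By exactness the connecting homomorphism then vanishes, the long exact sequence breaks into $0 \to H^p_{d-1}(\hcA/\R) \xrightarrow{\partial_{x^1}} H^p_d(\hcA) \to H^p_d(\hcF_1) \to 0$, and I conclude $H(\hcF_1) = \Theta/\partial_{x^1}\Theta$.

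For the inductive step I assume $H(\hcF_{i-1}) = \Theta/(\partial_{x^1}\Theta + \cdots + \partial_{x^{i-1}}\Theta) =: \Theta_{(i-1)}$ and feed the $i$-th line of~\eqref{eq:Seq} into its long exact sequence. As before, the constant class is an isomorphism onto $H^0_0(\hcF_{i-1}) = \R$, so $H(\hcF_{i-1}/\R) = \Theta_{(i-1)}/\R$. Provided the induced operator $\partial_{x^i}$ is injective on $\Theta_{(i-1)}/\R$ — equivalently, $\ker(\partial_{x^i} \colon \Theta_{(i-1)} \to \Theta_{(i-1)}) = \R$ — the connecting map again vanishes and I obtain $H(\hcF_i) = \Theta_{(i-1)}/\partial_{x^i}\Theta_{(i-1)} = \Theta/(\partial_{x^1}\Theta + \cdots + \partial_{x^i}\Theta)$, which closes the induction.

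The main obstacle is precisely this last injectivity statement, which is the exact analogue of Lemma~\ref{lemma:kernel-di-zero} with $\hcA$ replaced by $\Theta = \R[\{\theta^S, S\in Z_D\}]$, viewed as the algebra in the $D-1$ independent variables $x^1,\dots,x^{D-1}$ with the purely anticommuting generators $\theta^S$. I expect it to follow from the same homotopy-contraction argument: the $\deg_\theta = 0$ part of $\Theta$ is just $\R$, so the delicate $p=0$ Anderson-type de Rham step is vacuous here, while the $\deg_\theta = p > 0$ part is handled verbatim by the explicit homotopy~\eqref{eq:HomotopyContraction}, which is built only from the combinatorics of the $\theta$-indices. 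Care is needed to verify that restricting to the cohomology representatives $\Theta$ does not interfere with the contraction, and that the bidegree bookkeeping places every connecting homomorphism in a spot where the established injectivity annihilates it.
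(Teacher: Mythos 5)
Your proposal is correct and follows essentially the same route as the paper: the short exact sequences of Lemma~\ref{lemma:kernel-di-zero} are turned into long exact sequences for the $\Delta$-cohomology, the induced map $\partial_{x^i}$ on cohomology is identified with its combinatorial action on $\Theta$, and the vanishing of the connecting homomorphism is reduced to the injectivity of $\partial_{x^i}$ on $\Theta/(\partial_{x^1}\Theta+\cdots+\partial_{x^{i-1}}\Theta+\R)$, which the paper likewise proves by rerunning the Anderson homotopy contraction of Equation~\eqref{eq:HomotopyContraction}. Your extra observation that the delicate $p=0$ step is vacuous for $\Theta$ is a nice simplification but does not change the argument.
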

\begin{proof}
Let us start by considering the short exact sequence 
\begin{equation}
0\to\hcA/\R\xrightarrow{\d_{x^1}}\hcA\xrightarrow{\int dx^1}\hcF_1\to 0 ,
\end{equation}
which induces a long exact sequence in cohomology
\begin{equation}
H^p_{d-1}(\hcA/\R) \xrightarrow{\d_{x^1}} H^{p}_d(\hcA) \xrightarrow{\int d x^1} H^p_d(\hcF_1)\to 
H^{p+1}_{d}(\hcA/\R) \xrightarrow{\d_{x^1}} H^{p+1}_{d+1}(\hcA).
\end{equation}
By Lemma~\ref{lemmaH}, the cohomology classes in $H(\hcA)$, and of course in $H(\hcA/\R)$, are represented by elements of $\Theta$, i.e., by polynomials in $\theta^S$ with $S\in Z_D$. The derivations $\partial_{x^1}, \dots , \partial_{x^{D-1}}$ map such polynomials to polynomials of the same form, hence their action on the cohomology coincides with their natural action on $\Theta$. It clearly follows that the kernel of $H^{p+1}_{d}(\hcA/\R) \xrightarrow{\d_{x^1}} H^{p+1}_{d+1}(\hcA)$ is equal to zero. 

The Bockstein homomorphism 
$H^p_d(\hcF_1)\to H^{p+1}_{d}(\hcA/\R)$ 
is therefore the zero map, and we can conclude that $H^p_d(\hcF_1)$ is the quotient of $H^{p}_d(\hcA)$ by the image of $\partial_{x^1}$. So, we have
\begin{equation}
H^p_d(\hcF_1) \cong \frac{H^{p}_d(\hcA)}{\d_{x^1}H^p_{d-1}(\hcA)}\cong \frac{\Theta^p_d}{\d_{x^1}\Theta^p_{d-1}},
\end{equation}
which is precisely the assertion of the Proposition for $\hcF_1$. 

Let us now prove the same statement for $\hcF_i$, $i=2,\dots,D-1$, by induction. The short exact sequence
\begin{equation}
0\to\hcF_{i-1}/\R\xrightarrow{\d_{x^i}}\hcF_{i-1}\xrightarrow{\int dx^i}\hcF_i\to 0
\end{equation}
induces the long exact sequence in cohomology
\begin{equation}
H^p_{d-1}(\hcF_{i-1}/\R) \xrightarrow{\d_{x^i}} H^{p}_d(\hcF_{i-1}) \xrightarrow{\int dx^i} H^p_d(\hcF_i)\to 
H^{p+1}_{d}(\hcF_{i-1}/\R) \xrightarrow{\d_{x^i}} H^{p+1}_{d+1}(\hcF_{i-1}).
\end{equation}
Notice that the map $\d_{x^i}\colon H(\hcF_{i-1}/\R)\to H(\hcF_{i-1})$ is given, by inductive assumption, by
\begin{equation}\label{eq:map-d1-theta}
\d_{x^i} \colon \frac{\Theta}{\d_{x^1} \Theta +\cdots + \d_{x^{i-1}} \Theta + \R} \to \frac{\Theta}{\d_{x^1} \Theta +\cdots + \d_{x^{i-1}} \Theta} .
\end{equation}

\begin{lemma}
	The kernel of the map~\eqref{eq:map-d1-theta} is equal to zero.
\end{lemma}

\begin{proof} We can follow the proof of Lemma~\ref{lemma:kernel-di-zero}. By the same argument as there, we reduce the statement of the lemma to the vanishing of the cohomology of a certain complex, where we have an explicit homotopy contraction given by exactly the same formula as in Equation~\eqref{eq:HomotopyContraction}. 
\end{proof}

Since the kernel of the map~\eqref{eq:map-d1-theta}  is equal to zero, the Bockstein homomorphism vanishes, and therefore we can conclude that
\begin{equation}
H(\hcF_i)\cong \frac{\frac{\Theta}{\d_{x^1} \Theta +\cdots + \d_{x^{i-1}} \Theta}}{\d_{x^i}\left(\frac{\Theta}{\d_{x^1} \Theta +\cdots + \d_{x^{i-1}} \Theta}\right)}
\cong
\frac{\Theta}{\d_{x^1} \Theta +\cdots + \d_{x^i} \Theta} .
\end{equation}
\end{proof}

In particular, the previous Proposition implies that $H^p_d(\hcF_{D-1})$ is a finite dimensional vector space, whose dimension one can compute explicitly for each choice of $p,d,D$. Due to its importance we denote it by $H^p_d(D)$, which is then the degree $(p,d)$ component of 
\begin{equation}
\frac{\Theta}{\partial_{x^1} \Theta + \cdots + \partial_{x^{D-1}} \Theta}.
\end{equation}

Finally we can compute the Poisson cohomology in the scalar case:
\begin{theorem}
We have that $H^p_d(\hcF)\simeq H^p_d(D) \oplus H^{p+1}_{d}(D)$.
\end{theorem}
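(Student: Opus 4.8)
The plan is to deduce the statement from the last short exact sequence of Lemma~\ref{lemma:kernel-di-zero},
\[
0 \to \hcF_{D-1}/\R \xrightarrow{\partial_{x^D}} \hcF_{D-1} \xrightarrow{\int dx^D} \hcF_D \to 0,
\]
viewed now as a short exact sequence of complexes for the differential induced by $\Delta$. This is legitimate because $\Delta$ commutes with $\partial_{x^D}$ and with the projection $\int dx^D$, and because $\hcF_D=\hcF$ carries exactly $d_{\hat P}$, which is induced by $\Delta$. First I would write the associated long exact sequence in cohomology and track the bidegree carefully: $\partial_{x^D}$ fixes $p$ and raises $d$ by one, $\int dx^D$ fixes both $p$ and $d$, and the connecting map $\beta$ raises $p$ by one while fixing $d$. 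The relevant window of the sequence then reads
\[
H^p_d(\hcF_{D-1}/\R) \xrightarrow{\partial_{x^D}} H^p_d(\hcF_{D-1}) \xrightarrow{\int dx^D} H^p_d(\hcF) \xrightarrow{\beta} H^{p+1}_d(\hcF_{D-1}/\R) \xrightarrow{\partial_{x^D}} H^{p+1}_{d+1}(\hcF_{D-1}).
\]

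The heart of the argument, and the step I expect to be the main obstacle, is to show that the maps induced by $\partial_{x^D}$ on cohomology vanish identically. Here I would use Lemma~\ref{lemmaH}: every class of $H(\hcA)$, hence of the quotients $H(\hcF_{D-1})$ and $H(\hcF_{D-1}/\R)$, has a representative $\omega\in\Theta$, a polynomial in $\theta^S$ with $S\in Z_D$. Applying $\partial_{x^D}$ turns one factor $\theta^S$ into $\theta^{S+\xi_D}$ in each monomial; since $\Delta(u^S)=\theta^{S+\xi_D}$ and $\Delta$ is an odd derivation annihilating every $\theta$, each resulting term is $\Delta$-exact, so $\partial_{x^D}\omega=\Delta\eta$ for a suitable $\eta\in\hcA$. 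As the projections to $\hcF_{D-1}$ and $\hcF_{D-1}/\R$ are chain maps, the class $[\partial_{x^D}\omega]$ vanishes in both. This is precisely where the direction $x^D$ behaves differently from $x^1,\dots,x^{D-1}$ in the preceding Proposition: a shift by $\xi_D$ leaves $Z_D$ and lands on an exact class, whereas shifts by $\xi_1,\dots,\xi_{D-1}$ stay inside $\Theta$ and act nontrivially.

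Once both $\partial_{x^D}$ maps are known to vanish, the long exact sequence breaks into short exact pieces
\[
0 \to H^p_d(\hcF_{D-1}) \xrightarrow{\int dx^D} H^p_d(\hcF) \xrightarrow{\beta} H^{p+1}_d(\hcF_{D-1}/\R) \to 0.
\]
To finish I would identify the outer terms using the preceding Proposition, $H^p_d(\hcF_{D-1})=H^p_d(D)$, and observe that the quotient by $\R$ affects only bidegree $(0,0)$, which is irrelevant here since the target has super-degree $p+1\geq 1$; hence $H^{p+1}_d(\hcF_{D-1}/\R)=H^{p+1}_d(D)$. All groups being finite-dimensional $\R$-vector spaces, the sequence splits and yields $H^p_d(\hcF)\cong H^p_d(D)\oplus H^{p+1}_d(D)$. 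Beyond the vanishing of $\partial_{x^D}$ on cohomology and the small $\R$-bookkeeping, nothing further requires work: exactness of the underlying sequence is provided by Lemma~\ref{lemma:kernel-di-zero}, and the splitting is automatic over a field.
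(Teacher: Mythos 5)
Your proposal is correct and follows essentially the same route as the paper: the same short exact sequence of complexes, the same observation that $\partial_{x^D}$ of an element of $\Theta$ is $\Delta$-exact (the paper writes this explicitly as $\partial_{x^D}a=\Delta\sum_S u^S\,\partial a/\partial\theta^S$), hence the connecting maps and the splitting of the long exact sequence into short ones. Your extra remark that the quotient by $\R$ only matters in bidegree $(0,0)$ is a useful bit of bookkeeping the paper leaves implicit, but it does not change the argument.
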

\begin{proof}
The short exact sequence
\begin{equation}
0\to\hcF_{D-1}/\R\xrightarrow{\d_{x^D}}\hcF_{D-1}
\xrightarrow{\int dx^D}\hcF_D\to 0
\end{equation}
induces in cohomology the exact sequence
\begin{equation}\label{eq:LES-D}
H^p_{d-1}(\hcF_{D-1}/\R) \xrightarrow{\d_{x^D}} H^{p}_d(\hcF_{D-1}) \xrightarrow{\int dx^D}  H^p_d(\hcF_D)\to
H^{p+1}_{d}(\hcF_{D-1}/\R) \xrightarrow{\d_{x^D}} H^{p+1}_{d+1}(\hcF_{D-1}).
\end{equation}

It is easy to check that, for $a\in\Theta$,
\begin{equation}
\partial_{x^D} a = \Delta \sum_S u^S \frac{\partial a}{\partial \theta^S}, 
\end{equation}
hence the map $\d_{x^D}$ sends any element of $\Theta$, considered as a subspace of $\hcA$, to a $\Delta$-exact element of $\hcA$. 
This implies that the operator induces by $\d_{x^D}$ on the cohomology of $\hcA$ is equal to zero, and therefore it is also equal to zero on the cohomology of $\hcF_i$, $i=1,\dots,D-1$. The long exact sequence~\eqref{eq:LES-D} is then equivalent to the collection of short exact sequences
\begin{equation}
0\to H^{p}_d(\hcF_{D-1}) \to H^p_d(\hcF_D)\to 
H^{p+1}_{d}(\hcF_{D-1}/\R) \to 0,
\end{equation}
for $p\geq 0$, $d\geq 0$. This implies that $H^p_d(\hcF_D)\simeq H^{p}_d(\hcF_{D-1}) \oplus 
H^{p+1}_{d}(\hcF_{D-1})$.
\end{proof}


\subsection{Change of independent variables}

In Section \ref{cohosimple} we have proved a theorem about the Poisson cohomology for the bracket $\{u(x),u(y)\}=\partial_{x^D}\delta(x-y)$. On the other hand, the generic nondegenerate Poisson bracket~\eqref{scalpb} has the form, when written in flat coordinates, 
\begin{equation} \label{flatpb}
\{ u(x) , u(y) \} = \sum_{i=1}^D c^i \frac{\partial }{\partial x^i} \delta(x-y) .
\end{equation}
Let us denote $\Delta$ the Poisson bivector associated to the bracket \eqref{flatpb}.

In this Section we prove that the cohomology groups for the bracket \eqref{flatpb} are isomorphic to the ones we computed in the previous one, and hence that Theorem \ref{mainthm} holds for all the nondegenerate scalar DN brackets.

\begin{lemma}By a linear change of independent variables $(x^1,\ldots,x^D)$ the bracket \eqref{flatpb} can be brought to the form
\begin{equation}\label{eq:Normal3}
 \{v(\tilde{x}),v(\tilde{y})\}=\d_{\tilde{x}^D}\delta(\tilde{x}-\tilde{y}).
\end{equation}
\end{lemma}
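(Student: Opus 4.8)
The plan is to track how the right-hand side of~\eqref{flatpb} transforms under an invertible linear substitution $\tilde{x}^a = \sum_b A^a_b x^b$ with $A = (A^a_b) \in \mathrm{GL}(D,\R)$, and then to choose $A$ so as to annihilate all but the last derivative and normalise the surviving coefficient to one. The two transformation rules I need are purely formal: the chain rule $\partial_{x^i} = \sum_a A^a_i \partial_{\tilde{x}^a}$, and the scaling of the multidimensional delta, $\delta(x-y) = |\det A|\,\delta(\tilde{x}-\tilde{y})$, which follows from $\tilde{x}-\tilde{y} = A(x-y)$ and the homogeneity of the Dirac delta. Since $u$ is a scalar field, under a change of the independent variables alone it is merely reparametrised, so I set $v(\tilde{x}) := u(x)$ and keep the same symbol for the field.

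Substituting these rules into~\eqref{flatpb} gives
\begin{equation}
\sum_{i=1}^D c^i \partial_{x^i}\delta(x-y) = |\det A| \sum_{a=1}^D \tilde{c}^a\, \partial_{\tilde{x}^a}\delta(\tilde{x}-\tilde{y}), \qquad \tilde{c}^a := \sum_{i=1}^D A^a_i c^i .
\end{equation}
Thus~\eqref{flatpb} attains the desired form~\eqref{eq:Normal3} precisely when $A$ satisfies the two conditions $\tilde{c}^a = 0$ for $a = 1,\dots,D-1$ and $|\det A|\,\tilde{c}^D = 1$, i.e. when $Ac$ is a suitable multiple of the last basis vector. Because the bracket~\eqref{scalpb} is nondegenerate the coefficient vector $c = (c^1,\dots,c^D)$ is nonzero, so such an $A$ exists, and constructing it explicitly is the content of the remaining argument.

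I would first choose the rows $r_1,\dots,r_{D-1}$ of $A$ to be a basis of the hyperplane $c^\perp = \{\, r : \sum_i r_i c^i = 0\,\}$, which immediately gives $\tilde{c}^a = r_a\cdot c = 0$ for $a<D$. For the last row I take $r_D = t\,w$, where $w$ is any fixed vector with $w\cdot c \neq 0$ (for instance $w=c$) and $t\in\R$ is a free parameter; then $\{r_1,\dots,r_{D-1},w\}$ is a basis of $\R^D$, so $\det A = t\,m$ with $m\neq 0$ a fixed constant, while $\tilde{c}^D = t\,(w\cdot c)$. The normalisation condition becomes $|t\,m|\,t\,(w\cdot c) = t|t|\cdot |m|\,(w\cdot c) = 1$, and since $t\mapsto t|t|$ is a bijection of $\R$ onto itself this single scalar equation is solvable. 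This fixes $A\in\mathrm{GL}(D,\R)$ and establishes the lemma. The argument is elementary linear algebra together with the scaling behaviour of the delta; the only point requiring mild care, and the one I would flag as the sole obstacle, is matching the measure factor $|\det A|$ against the coefficient $\tilde{c}^D$. Keeping the first $D-1$ rows inside $c^\perp$ decouples the vanishing conditions from the normalisation and reduces the latter to the trivially solvable $t|t| = \mathrm{const}$.
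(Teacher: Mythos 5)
Your proposal is correct and follows essentially the same route as the paper: a linear substitution of the independent variables, the chain rule $\partial_{x^i}=\sum_a A^a_i\partial_{\tilde{x}^a}$, the scaling $\delta(x-y)=|\det A|\,\delta(\tilde{x}-\tilde{y})$, and the observation that $c\neq 0$ (nondegeneracy) makes the resulting linear conditions on $A$ solvable. The only cosmetic difference is that the paper normalises $\det J=1$ and solves $JC=\xi_D$ by counting equations versus free entries, whereas you allow a general determinant and absorb the Jacobian factor into the last row via the equation $t|t|=\mathrm{const}$; your version is slightly more explicit but proves the same statement.
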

\begin{proof}
Under a linear change of coordinates $x\mapsto \tilde{x}=J x$, with $J$ a constant $D\times D$ matrix, the derivations $\partial_x=\{\partial_{x^i}\}$ change according to $\d_{x}\mapsto\d_{\tilde{x}}=(J^{-1})^T\d_x$. For the $D$-dimensional Dirac's delta function to be invariant, we require in addition that $\det J=1$. Introducing the $D$-dimensional vector $C=(c_1,\ldots,c_D)$ we have to solve the algebraic set of equations $J\cdot C=\xi_D$. We get $D$ equations for $D^2-1$ entries of the matrix $J$. Hence, the solution is not unique but it always exists.
\end{proof}
We will denote $\tilde{\Delta}$ the Poisson bivector defining the bracket \eqref{eq:Normal3}.
\begin{lemma} The cohomology groups $H^p_d(\hcF,\Delta)$ are isomorphic to $H^p_d(\hcF,\tilde{\Delta})$.
\end{lemma}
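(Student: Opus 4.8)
The plan is to realize the linear change of independent variables $\tilde{x}=Jx$ as an \emph{algebra automorphism} $\Phi$ of $\hcA$ that intertwines the two differentials, and then to invoke the elementary fact that isomorphic cochain complexes have isomorphic cohomology. Since $\tilde{x}=Jx$ gives $\partial_{x^i}=\sum_j J_{ji}\partial_{\tilde{x}^j}$, the operator $\partial^S_x=\partial_{x^1}^{s_1}\cdots\partial_{x^D}^{s_D}$ expands as $\partial^S_x=\sum_{|\tilde S|=|S|}M^S_{\tilde S}\,\partial^{\tilde S}_{\tilde x}$ for constants $M^S_{\tilde S}$ that are polynomial in the entries of $J$ and mix only multi-indices of the same length. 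I would define $\Phi$ on generators by $\Phi(u^S)=\sum_{\tilde S}M^S_{\tilde S}u^{\tilde S}$ and $\Phi(\theta^S)=\sum_{\tilde S}M^S_{\tilde S}\theta^{\tilde S}$, extended as an automorphism fixing $C^\infty(U)$ (so $\Phi(u)=u$, $\Phi(\theta)=\theta$). Because $\det J=1$ makes the Dirac delta, and hence the density $d^Dx$, invariant, the even jets $u^S$ and the odd jets $\theta^S$ transform through the \emph{same} matrix $M$, which is exactly what makes $\Phi$ compatible with the $\theta$-formalism. By construction $\Phi$ preserves both gradations $\deg$ and $\deg_\theta$, since it mixes only variables with equal $|S|$ and never exchanges $u$'s with $\theta$'s.

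Next I would record two conjugation identities, both obtained by the same short index computation on generators. Differentiating the defining relation of $M$ gives $M^{S+\xi_i}_{\tilde S'}=\sum_j J_{ji}M^{S}_{\tilde S'-\xi_j}$, and feeding this into the definition of $\Phi$ yields
\begin{equation}
\Phi\circ\partial_{x^i}=\Big(\textstyle\sum_j J_{ji}\,\partial_{x^j}\Big)\circ\Phi,
\qquad
\Phi\circ\Delta_i=\Big(\textstyle\sum_j J_{ji}\,\Delta_j\Big)\circ\Phi,
\end{equation}
where $\Delta_i:=\sum_S\theta^{S+\xi_i}\frac{\partial}{\partial u^S}$ is the odd derivation with $u^S\mapsto\theta^{S+\xi_i}$, $\theta^S\mapsto 0$. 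Both sides of each identity are (even, resp. odd) derivations, so it suffices to check them on the generators $u^S$, $\theta^S$ and on $C^\infty(U)$, which is immediate from the displayed recursion for $M$. Since $J$ is invertible, the first identity shows $\Phi\big(\sum_i\partial_{x^i}\hcA\big)=\sum_j\partial_{x^j}\hcA$, so $\Phi$ descends to a bigraded automorphism of $\hcF=\hcA/\sum_i\partial_{x^i}\hcA$.

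The decisive point is the second identity together with the normalization $JC=\xi_D$. The differential \eqref{eq:defDP} of the bracket \eqref{flatpb} is $D_\Delta=\sum_i c^i\Delta_i$, while that of \eqref{eq:Normal3} is $D_{\tilde\Delta}=\Delta_D$. Conjugating, I obtain
\begin{equation}
\Phi\,D_\Delta\,\Phi^{-1}=\sum_i c^i\sum_j J_{ji}\Delta_j=\sum_j (JC)_j\,\Delta_j=\Delta_D=D_{\tilde\Delta}.
\end{equation}
Thus $\Phi$ is a chain isomorphism from $(\hcA,D_\Delta)$ to $(\hcA,D_{\tilde\Delta})$; since it preserves $\sum_i\partial_{x^i}\hcA$ and both $D_\Delta$, $D_{\tilde\Delta}$ descend to $\hcF$, it is equally a chain isomorphism $(\hcF,d_\Delta)\to(\hcF,d_{\tilde\Delta})$, bigraded by $(p,d)$. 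Passing to cohomology gives the claimed $H^p_d(\hcF,\Delta)\cong H^p_d(\hcF,\tilde\Delta)$.

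I expect the only genuine work to be the bookkeeping behind the two conjugation identities, namely verifying the recursion $M^{S+\xi_i}_{\tilde S'}=\sum_j J_{ji}M^S_{\tilde S'-\xi_j}$ and checking that the derivations agree on generators; everything afterwards is formal homological algebra. A minor point worth isolating is the role of $\det J=1$: if one drops it, the same argument still produces $\Phi D_\Delta\Phi^{-1}=\lambda\, D_{\tilde\Delta}$ for a nonzero constant $\lambda$, which merely rescales the differential and hence leaves the cohomology unchanged, so the isomorphism of cohomology groups is in fact insensitive to this normalization.
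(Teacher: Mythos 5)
Your proposal is correct and follows essentially the same route as the paper: both realize the linear change of independent variables as a bigraded automorphism of $\hcA$ that transforms the jet variables $u^S$ and $\theta^S$ tensorially, conjugates $D_\Delta$ into $D_{\tilde\Delta}$ via the normalization $JC=\xi_D$, and preserves $\sum_i\partial_{x^i}\hcA$ so that the isomorphism descends to $\hcF$ and to cohomology. Your version merely makes the paper's argument more explicit by isolating the recursion for the matrix $M$ and the two conjugation identities.
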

\begin{proof}
The same transformation law $\d_{x}\mapsto\d_{\tilde{x}}=(J^{-1})^T\d_x$ applies to the variables $u^{\xi_i}=\d_{x^i}u$ and $\theta^{\xi_i}$. Since $J$ is constant, the higher order derivatives leave it unaffected, in such a way that the transformation law is tensorial
\begin{equation}
 u^I\mapsto \tilde{u}^I=\left((J^{-1})^T\right)^{|I|}u^{I'}
\end{equation}
where $|I|=k_1+\ldots+k_d$ and $|I|=|I'|$. More precisely, we have
\begin{multline}
 \tilde{u}^{(k_1,\ldots,k_D)}=\\
\left((J^{-1})^T\right)^{i_1}_{1}\cdots\left((J^{-1})^T\right)^{i_{k_1}}_{1}\left((J^{-1})^T\right)^{i_{k_1+1}}_{2}\ldots\left((J^{-1})^T\right)^{i_{|I|}}_{D}\frac{\d^{|I|}}{\d x^{i_1}\ldots\d x^{i_{|I|}}}u.
\end{multline}
Moreover, the partial derivatives with respect to the jets transform with $J^T$, so that the differential $\Delta$ is transformed, being homogeneous of differential order $1$, as $(J^{-1})^T\Delta$. Under the change of independent variables, $\hcA^p_d\mapsto\hcA^p_d$ for any component of bidegree $(p,d)$.

Since $J$ is invertible and the change of coordinates does not change the differential order of the jets, there exists an isomorphism between the kernels and the images of $\Delta\colon\hcA^p_d\to\hcA^p_{d+1}$ and $\tilde{\Delta}\colon\hcA^p_d\to\hcA^p_{d+1}$. Hence the quotient spaces $H^p_d(\Delta, \hcA)$ and $H^p_d(\tilde{\Delta}, \hcA)$ are isomorphic.

Moreover, the change of independent variable leaves the space $\hcF$ invariant. Let us consider the quotient operation
$$\hcF=\frac{\hcA}{\d_{x^1}\hcA+\cdots+\d_{x^D}\hcA}.$$
The change of independent coordinates maps each partial derivative to a linear combination of all the $D$ derivatives
$$\d_{x^i}\mapsto\left((J^{-1})^T\d_{x}\right)_i=\sum_j\frac{\d x^j}{\d \tilde{x}^i}\d_{x^j}.$$
Since $J$ is nondegenerate, however, $\Span(\d_{\tilde{x}}\hcA)$ is the same as $\Span(\d_{x}\hcA)$, which proves the claim.

Equation \eqref{defcohint} reads
	$$
	H^p_d(\hcF)\cong \frac{\Theta^p_d}{\d_{x^1} \Theta^p_{d-1} +\cdots + \d_{x^D}\Theta^p_{d-1}}
	$$
where $\Theta^p_d=H^p_d(\hcA)$. The linear change of independent coordinates, as already discussed, maps the numerator of the quotient to an isomorphic space. For the denominator we have
\begin{multline*}
\d_{x^1}\Theta^p_{d-1}+\cdots\d_{x^D}\Theta^p_{d-1}\mapsto\\\d_{\tilde{x}^1}\tilde{\Theta}^p_{d-1}+\cdots\d_{\tilde{x}^D}\tilde{\Theta}^p_{d-1}=\d_{x^1}\tilde{\Theta}^p_{d-1}+\cdots\d_{x^D}\tilde{\Theta}^p_{d-1}\cong\d_{x^1}\Theta^p_{d-1}+\cdots\d_{x^D}\Theta^p_{d-1}.
\end{multline*}
Since both the numerator and the denominator of the quotient $H^p_d(\hcF)$ are mapped to isomorphic spaces, such is the quotient itself. This allows us to extend the Theorem \ref{mainthm} to all the nondegenerate scalar DN brackets.
\end{proof}

\section{Direct computation of some cohomology groups}

In this section we show how one can explicitly compute some cohomology groups $H^p_d(\hcF,\Delta)$ using the formalism of Poisson Vertex Algebras \cite{bdsk09}. The notion of Poisson Vertex Algebra was introduced as an algebraic framework to deal with evolutionary Hamiltonian PDEs with one spatial variable, but an extension to $D$-dimensional equations is straightforward and has already been exploited to compute some cohomology groups for $(D=2,N=2)$ DN brackets \cite{c14}.
\begin{definition}
A ($D$-dimensional) Poisson Vertex Algebra (PVA) is a differential algebra $(\cV,\{\d_i\}_{i=1}^D)$ endowed with  $D$ commuting derivations and with a bilinear operation $\{\cdot_{\mlambda}\cdot\}\colon \cV\otimes \cV\to \R[\lambda_1,\ldots,\lambda_D]\otimes \cV$ called the $\lambda$ \emph{bracket} satisfying the following set of properties:
\begin{enumerate}
\item $\{\d_i f_{\mlambda} g\}=-\lambda_i\{f_{\mlambda} g\}$
\item $\{f_{\mlambda}\d_i g\}=\left(\d_\alpha+\lambda_i\right)\{f_{\mlambda} g\}$
\item $\{f_{\mlambda} gh\}=\{f_{\mlambda} g\}h+\{f_{\mlambda} h\}g$
\item $\{fg_{\mlambda} h\}=\{f_{\mlambda+\md}h\}g+\{g_{\mlambda+\md}h\}f$
\item $\{g_{\mlambda}f\}=-{}_{\to}\{f_{-\mlambda-\md}g\}$ (PVA-skewsymmetry)
\item $ \{f_{\mlambda}\{g_{\mmu}h\}\}-\{g_{\mmu}\{f_{\mlambda}h\}\}=\{\{f_{\mlambda}g\}_{\mlambda+\mmu}h\}$ (PVA-Jacobi identity).
\end{enumerate}
We use a multi-index notation  $\mlambda^I=\lambda_1^{i_1}\lambda_2^{i_2}\cdots\lambda_D^{i_D}$ for $I=(i_1,i_2,\ldots,i_D)$. The terms in the RHS of Property (4) are to be read, if $\{f_{\mlambda}h\}=\sum_I B(f,h)_I \mlambda^I$, as $\{f_{\mlambda+\md}h\}g=\sum_IB(f,h)_I(\mlambda+\md)^Ig=\sum_IB(f,h)_I(\lambda_1+\d_1)^{i_1}\cdots(\lambda_D+\d_D)^{i_D}g$. Similarly, the skewsymmetry property \emph{in extenso} is $\sum_I B(g,f)_I\mlambda^I=-\sum_I(-\mlambda-\md)^I B(f,g)_I$.
\end{definition}
When we consider $\cV=\cA$ the algebra of differential polynomials and identify the ``total derivations'' $\d_{x^i}$ with $\d_i$, the set of axioms for the PVA translates into a practical formula that gives the bracket between two elements of $\cA$ in terms of the bracket between the so-called generators $u^\alpha$:
\begin{equation}\label{eq:master}
  \{f_{\mlambda}g\}=\sum_{\substack{\alpha,\beta=1\ldots,N\\L,M\in\Z^D_{\geq 0}}}\frac{\d g}{\d u^\beta_M}(\mlambda+\md)^M\{u^\alpha_{\mlambda+\md}u^\beta\}(-\mlambda-\md)^L\frac{\d f}{\d u^\alpha_L}.
\end{equation} 

The main reason why the notion of a Poisson Vertex Algebra is relevant and useful for the study of the Poisson cohomology is that there exists an isomorphism between the Poisson Vertex Algebras and the Poisson brackets on the space of local functionals. In particular, given the $\lambda$ bracket of a PVA we have that
\begin{equation}
 \left\{\int f,\int g\right\}=\int\{f_{\mlambda}g\}|_{\mlambda=0}
\end{equation}
and, given a Poisson bracket in the space of the local densities $\cA$, $\{u^\alpha(x),u^\beta(y)\}=\sum_I B^{\alpha\beta}_I(u(x);u_L(x))\md^I\delta(x-y)$, we can define a $\lambda$ bracket
\begin{equation}
 \{u^\alpha_{\mlambda}u^\beta\}=\sum_I B^{\beta\alpha}_S(u;u_L)\mlambda^S.
\end{equation}
Moreover, an evolutionary Hamiltonian PDE of form $u_t=\{\int h,u\}$ is mapped to $u_t=\{h_{\mlambda}u\}|_{\mlambda=0}$. The advantage of working with PVAs rather than on local functionals is that there is no need to integrate --- or, equivalently, to perform the quotient from $\cA$ to $\cF$ --- since the properties (5) and (6) encode the skewsymmetry and the Jacobi identity for the bracket after the integration. This allows to perform explicitly computations, a few examples of which we will now demonstrate.
\subsection{Symmetries of the bracket}
The $\lambda$ bracket is equivalent to the Poisson bracket defined by the bivector $\Delta=\frac{1}{2}\theta\theta^{\xi_D}$ is $\{u_{\mlambda}u\}=\lambda_D$. As we have already discussed in Sect. 2.4, the elements of the first cohomology group $H^1(\hcF,\Delta)$ are the symmetries of the bracket that are not Hamiltonian, namely the evolutionary vector fields
\begin{equation}
 X=\sum_I\md^I X(u;u_L)\frac{\d}{\d u_I}
\end{equation}
satisfying
\begin{equation}\label{eq:sym}
 X(\{u_{\mlambda} u\})=\{X(u)_{\mlambda}u\}+\{u_{\mlambda}X(u)\}
\end{equation}
of form different from
\begin{equation}
 X(u)=\{h_{\mlambda}u\}|_{\mlambda=0}
\end{equation}
Let us compute $H^1_0$ and $H^1_1$ and $H^1_2$ for generic $D$, showing the agreement with the results of Corollary \ref{explgr}.

For $H^1_0$ we want to consider an evolutionary vector field whose component $X$ depends only on $u$. Since the bracket between the generators $u$ is constant, the LHS of \eqref{eq:sym} vanishes; computing the RHS with the help of formula \eqref{eq:master} and setting it equal to 0 immediately gives that $X$ must be a constant. Given the form of the Poisson bivector, we cannot have any constant Hamiltonian vector fields; thus, $H^1_0=\R$.

To compute $H^1_1$ we are interested in vector fields of first degree, namely of form $X=\sum_{i=1}^DX^i(u)u_{\xi_i}$. Imposing the condition of symmetry does not give any condition on $X^D$ but forces $X^i$ for $i=1,\ldots,D-1$ to be constants. On the other hand, if we take a generic Hamiltonian density of degree 0, i.e., a function $h(u)$, its Hamiltonian vector field will be $X_h=h'(u)u_{\xi_D}$, where $X^D=h'$ can be arbitrary. This means that the $D-1$ constants only are elements of $H^1_1=\R^{D-1}$.

In principle, computing any component of the first cohomology group means performing the same computations as the ones for the first two. However, they become more and more involved with the growing of the degree (and hence of the differential order). Another advantage of the formulation of the problem in term of PVAs is that the computations are straightforward and can be performed by a computer. Computing \eqref{eq:sym} for a  degree two evolutionary vector field, whose component is $X=\sum_{a,b=1}^DX_1^{ab}u_{\xi_a}u_{\xi_b}+X_2^{ab}u_{\xi_a+\xi_b}$, we get that the necessary condition is that $X_1=X_2=0$ for all $(a,b)$. This means that $H^1_2=0$.

\subsection{Deformations of the bracket}
As already discussed, the second cohomology group classifies the compatible infinitesimal deformations of the $\lambda$ bracket associated with the Poisson bivector $\Delta$ that cannot be obtained by a Miura transformation. We will demonstrate a few results for the case $D=2$. Definition \ref{defDefo}, in terms of $\lambda$ bracket, translates into considering a deformed bracket
\begin{equation}
 \{\cdot_{\mlambda}\cdot\}=\{\cdot_{\mlambda}\cdot\}_\Delta+\epsilon\{\cdot_{\mlambda}\cdot\}^\sim
\end{equation}
and requiring that it satisfies Property (5) and (6) of the PVAs up to the order $\epsilon$. Since $\{\cdot_{\mlambda}\cdot\}_\Delta$ is constant, it is enough to impose the skewsymmetry of $\{\cdot_{\mlambda}\cdot\}^\sim$ and 
\begin{equation}\label{eq:defo}
 \{u_{\mlambda}\{u_{\mmu}u\}^\sim\}_\Delta+\{u_{\mmu}\{u_{\mlambda}u\}^\sim\}_\Delta=\{\{u_{\mlambda}u\}^\sim_{\mlambda+\mmu}u\}_\Delta.
\end{equation}
The form of the deformed bracket we choose depends on which component of the second cohomology group we are interested in: when computing $H^2_d$ we will consider homogeneous $\lambda$ brackets of degree $d$ (as for the gradation on $\hcA$, we consider $\deg\mlambda^I=|I|$ and $\deg\{u_{\mlambda}u\}=\deg(B(u;u_L)_I\mlambda^I)=\deg B+|I|$). Property (5) prevents the existence of nontrivial elements in $H^2_0$, since there cannot be skewsymmetric brackets of form $\{u_{\mlambda}u\}=A(u)$.

For $H^2_1$ we consider brackets of form $\{u_{\mlambda}u\}^\sim=\sum_{a=1}^22A^a(u)\lambda_a+A^{a}{}'u_{\xi_a}$ --- we have already implemented the skewsymmetry of the deformation. Computing  condition \eqref{eq:defo} for this bracket does not give any constraint on $A^2$ but implies that $A^1$ is constant. Let us consider a generic Miura transformation that gives rise to a bracket of  degree one: since $\Delta$ is  degree one as well, the transformation must be of degree 0, so $u\mapsto U= u+\epsilon F(u)$. We get
\begin{equation}
 \{U_{\mlambda} U\}_\Delta=\{u_{\mlambda} u\}_\Delta+\epsilon\left(2F'(u)\lambda_2+F''(u)u_{\xi_2}\right)+O(\epsilon^2).
\end{equation}
This computation shows that we can never get from a Miura transformation a bracket of form $\{u_{\mlambda}u\}=\lambda_1$, which is compatible with $\{\cdot_{\mlambda}\cdot\}_\Delta$. This means that $H^2_1=\R$.

We can proceed similarly for $H^2_2$; in this case a general deformed bracket of degree 2 will be
\begin{equation}
 \{u_{\mlambda}u\}^\sim=\sum_{a,b}A^{ab}\lambda_a\lambda_b+B^{ab}\lambda_au_{\xi_b}+C^{ab}u_{\xi_a}u_{\xi_b}+D^{ab}u_{\xi_a+\xi_b}.
\end{equation}
Note that, by definition, $A$, $C$, and $D$ are symmetric in the indices $(a,b)$. Imposing  skewsymmetry we find the relations
\begin{align}
 A^{ab}&=0,\\
C^{ab}&=\frac{1}{4}\left(B^{ab}{}'+B^{ba}{}'\right),\\
D^{ab}&=\frac{1}{4}\left(B^{ab}+B^{ba}\right).
\end{align}
We now impose  condition \eqref{eq:defo} on the bracket and find that the parameters $B$ must satisfy the set of equations
\begin{align}
 B^{11}&=0,\\
B^{12}+B^{21}&=0,\\
B^{22}&=0.
\end{align}
In other words, all the compatible infinitesimal deformations of $\Delta$ of degree 2 are parametrized by a single function $B(u)=B^{12}$ according to the given prescription.
Any of these compatible deformations can be obtained from $\Delta$ by the Miura transformation
\begin{equation}
 u\mapsto U=u-\epsilon  u_{\xi_1} \int^u B(s)\ud s
\end{equation}
which exists for any function $B$ of a single variable.
This means that $H^2_2=0$, in agreement with  formula~\eqref{HD2}.

\end{document}